\pgfplotsset{compat=newest}
\definecolor{rwthBlue}{RGB}{0,84,159}
\definecolor{rwthLightBlue}{RGB}{142,186,229}
\definecolor{rwthOrange}{RGB}{246,168,0}
\definecolor{rwthGreen}{RGB}{87,171,39}
\definecolor{rwthDarkGreen}{RGB}{0,97,101}
\definecolor{rwthLightGreen}{RGB}{189,205,0}
\definecolor{rwthCyan}{RGB}{0,152,161}
\definecolor{rwthRed}{RGB}{204,7,30}
\definecolor{rwthDarkRed}{RGB}{161,16,53}
\definecolor{rwthLila}{RGB}{122,111,172}
\definecolor{rwthPurple}{RGB}{97,33,88}
\definecolor{rwthYellow}{RGB}{255,237,0}
\definecolor{IGPMred}{RGB}{156,12,16}
\definecolor{IGPMblue}{RGB}{0,105,164}
\newcounter{assum}
\newcommand{\bu} {\mathbf{u}}
\newcommand{\bA} {\mathbf{A}}
\newcommand{\R} {\mathbb{R}}
\newcommand{\T} {\mathcal{T}}
\newcommand{\J} {\mathcal{J}}
\newcommand{\I} {\mathcal{I}}
\newcommand{\F} {\mathcal{F}}
\newcommand{\Pol} {\mathcal{P}}
\newcommand{\bx} {\mathbf{x}}
\newcommand{\by} {\mathbf{y}}
\newcommand{\bn} {\mathbf{n}}
\newcommand{\bv} {\mathbf{v}}
\newtheorem{remark}{Remark}
\newcommand{\jumpleft}{[\![}
\newcommand{\jumpright}{]\!]}
\newcommand{\jump}[1]{\jumpleft #1 \jumpright}
\newcommand{\cTG}{\mathcal{T}_h^\Gamma}
\newcommand{\OG}{\Omega_h^\Gamma}
\newcommand{\MAT}[1]{\mathbf{#1}}
\def\b|{{\|\hskip-0.16ex|}}
\title{Optimal preconditioners for a Nitsche stabilized fictitious domain finite element method}
\author{Sven Gro{\SS}\thanks{Institut f\"ur
Geometrie und Praktische  Mathematik, RWTH Aachen University, D-52056 Aachen,
Germany; email: {\tt gross@igpm.rwth-aachen.de}}
\and Arnold Reusken\thanks{Institut f\"ur
Geometrie und Praktische  Mathematik, RWTH Aachen University, D-52056 Aachen,
Germany; email: {\tt reusken@igpm.rwth-aachen.de}} 
 }
\begin{document}

\maketitle

\begin{abstract}
In this paper we consider a class of fictitious domain finite element methods known from the literature. These methods use standard finite element spaces on a fixed unfitted triangulation combined with the Nitsche technique and a ghost penalty stabilization. As a model problem we consider the application of such a method to the Poisson equation. We introduce and analyze a new class of preconditioners  that is based on a subspace decomposition approach. The finite element space is split into two subspaces, where one subspace is spanned by all nodal basis functions corresponding to nodes on the boundary of the fictitious domain and the other space is spanned by all remaining nodal basis functions. We will show that this splitting is stable, uniformly in the discretization parameter  and in the location of the problem boundary in the triangulation. We also prove that the Galerkin discretization in the first subspace leads to a uniformly well-conditioned matrix and that the Galerkin discretization in the second subspace  is uniformly equivalent to a standard finite element discretization of a Poisson equation on the fictitious domain with homogeneous Dirichlet boundary conditions. Results of numerical experiments that illustrate optimality of such a  preconditioner are included.
\end{abstract}
\begin{AMS} 65N12, 65N22, 65N30, 65N85
\end{AMS}

\begin{keywords}
   unfitted finite elements, CutFEM,  Nitsche method, fictitious domain method, preconditioner
\end{keywords}

\section{Introduction}
In recent years many papers appeared in which the so-called CutFEM paradigm is developed and  analyzed, cf. the overview references \cite{burman2015cutfem,CutFEM}. In this approach for discretization of a partial differential equation a fixed \emph{unfitted mesh} is used that is not aligned with a (moving) interface and/or a complex domain boundary. On this mesh \emph{standard finite element spaces} are used. For treating the boundary and/or interface conditions, either a Lagrange multiplier technique or Nitsche's method is applied. In the setting of the present paper we restrict to \emph{Nitsche's method}. Furthermore, to avoid ill-conditioning of the resulting discrete systems (due to ``small cuts'') a stabilization technique is used. The most often used approach is the \emph{ghost-penalty stabilization}. In the literature the different components of this general technique are studied, error analyses are presented and different fields of applications are studied \cite{burman2015cutfem,CutFEM}. Related unfitted finite element methods are  popular in fracture mechanics \cite{fries2010extended}; in that community these are often called extended finite element methods (XFEM).

Almost all papers on CutFEM (or XFEM) either treat  applications of this methodology or present discretization error analyses. In relatively few papers efficient solvers for the resulting discrete problems are studied. 
In \cite{burmanhansbo12,zawakrbe13,HaZa2014} for the resulting stiffness matrix condition  number bounds of the form $ch^{-2}$, with a constant $c$ that is independent of how an interface or boundary intersects the triangulation, have been derived. In \cite{burmanhansbo12} a fictitious domain variant of CutFEM is introduced and it is shown that discretization of a Poisson equation using this method yields a stiffness matrix with such a condition number bound. In \cite{zawakrbe13} a similar result is derived for CutFEM applied to  a Poisson interface problem. In \cite{HaZa2014}  a condition number bound is derived  for CutFEM applied to a Stokes interface problem. These papers do \emph{not} treat efficient preconditioners for the stiffness matrix. 

There are  few papers in which (multigrid type) efficient preconditioners for CutFEM or closely related discretizations (e.g., XFEM) are treated, e.g.,  \cite{berger2012inexact,badia2017robust,de2017preconditioning,jo2018geometric,de2020preconditioning,ludescher1,ludescher}.
In none of these papers a rigorous analysis of the spectral quality of the preconditioner is presented. The only paper that we know of that contains such a rigorous analysis is \cite{lehrenfeld2017optimal}. In that paper a CutFEM \emph{without} stabilization is analyzed for a two-dimensional Poisson interface problem.

The main topic of the present paper is an analysis of a (new) subspace decomposition based preconditioner that is optimal (in a sense explained in section~\ref{sectPrecond}) for a CutFEM  fictitious domain method as in \cite{burmanhansbo10,burmanhansbo12,Massing2014}. We expect that similar preconditioners can be developed and rigorously analyzed for other CutFEM applications such as a Stokes fictitious domain method, or Poisson and Stokes interface problems.

  We explain the key idea of the preconditioner. In the setting of a fictitious domain approach the finite element space is split into two subspaces. One subspace is spanned by all nodal basis functions corresponding to nodes on the boundary of the fictitious domain and the other space is spanned by all remaining nodal basis functions. We will show that this splitting is stable, uniformly in the discretization parameter $h$ and in the location of the true boundary in the triangulation. We also prove that the Galerkin discretization in the first subspace leads to a uniformly well-conditioned matrix and that the Galerkin discretization in the second subspace  is uniformly equivalent to a standard finite element discretization of a Poisson equation on the fictitious domain with homogeneous Dirichlet boundary conditions. Using this property it can be shown that a multigrid method yields an optimal preconditioner for the Galerkin discretization in the second subspace.  An additive Schwarz subspace correction method (or, equivalently, block Jacobi) yields an optimal preconditioner for the CutFEM fictitious domain discretization.
  
  We briefly address important differences between the  results in this paper and in \cite{lehrenfeld2017optimal}. In the latter a CutFEM variant  \emph{without}  stabilization is used which is applied to a Poisson interface problem. The preconditioner is based on a subspace splitting that is similar to the one studied in this paper. The analysis in  \cite{lehrenfeld2017optimal} is restricted to linear finite elements and two-dimensional problems. In this paper we consider the CutFEM \emph{with stabilization}. It turns out that this allows an elegant, rather simple  and much more general analysis. In particular, the analysis covers two- and three-dimensional problems, arbitrary polynomial degree finite elements and triangulations that are shape regular but not necessarily quasi-uniform.

The paper is organized as follows.   In Section~\ref{sec:disc} we describe a CutFEM fictitious domain method known from the literature. In Section~\ref{sectsubspace} we introduce and analyze a natural splitting of the finite element space. Based on this stable splitting we propose (optimal) preconditioners in Section~\ref{sectPrecond}. In Section~\ref{sectNumExp} results of numerical experiments with these preconditioners are presented.

\section{CutFEM: a stabilized Nitsche fictitious domain method} \label{sec:disc}
We recall a fictitious domain method known from the literature \cite{burmanhansbo12,Massing2014}. We restrict to the simple setting of the Poisson equation:
\begin{equation}\label{Poisson}
\begin{split}
 -\Delta u  &= f \quad \text{in } \Omega,\\
 u &= g  \quad \text{on } \Gamma:=\partial\Omega.
\end{split}
\end{equation}
Here $\Omega \subset \R^d$, $d=2,3$, is an open connected Lipschitz domain. 
For simplicity we take Dirichlet boundary conditions on the whole boundary. The method and analysis below are easily extended to the case with Dirichlet boundary conditions on only part of the boundary and to more general symmmetric elliptic boundary value problems. \\
We take a larger polygonal domain $\Omega^\ast \supset \Omega$ and a family of shape regular simplicial triangulations $\{\T_h^\ast\}_{h >0}$ of the larger domain $\Omega^\ast$. The set of simplices that cut the domain $\Omega$ and the corresponding \emph{fictitious domain} are defined by
\begin{equation} \label{Pa}
  \T_h:=\{\, T \in \T_h^\ast~|~{\rm meas}_{d-1}(T \cap \Omega) >0\,\}, \quad \Omega_h:=\cup_{T \in \T_h} T.
\end{equation}
The set of simplices in $\T_h$ that have nonzero intersection with $\Gamma$ and the corresponding ``boundary strip'' are defined by
\[
  \cTG:=\{\, T \in \T_h~|~T \cap \Gamma\neq \emptyset \\,\}, \quad \Omega_h^\Gamma:= \cup_{T \in \cTG} T.  
\]
Furthermore, we define the interior domain $\Omega_h^0:=\Omega_h\setminus \Omega_h^\Gamma \subset \Omega$. We refer to Figure~\ref{fig:sketch} for illustration.
To avoid technical difficulties we assume that $\cTG$ coincides with the set of simplices that have a nonzero intersection with $\partial \Omega_h$, i.e.,
\begin{equation} \label{ass1}
 \cTG = \{\, T\in \T_h~|~T \cap \partial \Omega_h \neq \emptyset\,\}.
\end{equation}
This assumption is satisfied if $\Gamma$ is sufficiently resolved by the triangulation $\T_h$, i.e.,  for $h$ sufficiently small. For the ghost penalty stabilization we need a subset of the faces $F$ of $T \in \cTG$:
\[
  \F_g:= \{\, F \subset \partial T~|~T \in \cTG,~~\F \not\subset \partial \Omega_h\,\}.
\]
For the discretization of \eqref{Poisson} we use, for a fixed polynomial degree $k \geq 1$, the standard $H^1$-conforming finite element space on the fictitious domain $\Omega_h$:
\begin{equation} \label{FEspace}
  V_h:=\{\, v_h \in C(\Omega_h)~|~ {v_h}_{|T} \in \Pol_k~~\text{for all}~T \in \T_h\,\}.
\end{equation}
On this space we define the stabilized Nitsche bilinear form
\begin{equation} \label{defah} \begin{split}
  a_h(u,v)& := (\nabla u, \nabla v)_\Omega - (n_\Gamma\cdot \nabla u, v)_\Gamma - (u, n_\Gamma\cdot \nabla v)_\Gamma +
   \gamma (h^{-1} u,v)_\Gamma  \\ & + \beta \sum_{\ell=1}^k \sum_{F \in \F_g} h_F^{2 \ell-1}(\jump{\partial_n^\ell u},\jump{\partial_n^\ell v})_{F},
   \end{split}
\end{equation}
where $n_\Gamma$ is the outward pointing unit normal on $\Gamma$, $(f,g)_\omega=\int_\omega f g\, dx$  the $L^2$ scalar product on $\omega$, $\jump{\cdot}$ the usual jump operator (across the face $F$) and $\partial_n$ the derivative in direction normal to the face $F$. Since we do not assume quasi-uniformity of the triangulation, the scaling with $h^{-1}$ is element-wise, i.e., $(h^{-1}u,v)_{\Gamma}:=\sum_{T\in \cTG} h_T^{-1}(u,v)_{T\cap \Gamma}$.
The parameters $\gamma >0$, $\beta >0$ are fixed. The bilinear form in the second line of \eqref{defah} is the ghost penalty stabilization. Different equivalent variants of this stabilization are known in the literature, cf. \cite{Burman2010,Preuss2018,LehrenfeldOlshanskii2019}. The  choice of a particular variant of this stabilization is not relevant for the analysis in this paper. The discrete problem is as follows: determine $u_h \in V_h$ such that
\begin{equation} \label{discrete}
 a_h(u_h,v_h)= (f,v_h)_\Omega + \gamma(h^{-1} g, v_h)_\Gamma - (g,n_\Gamma\cdot \nabla v_h)_\Gamma\quad \text{for all}~v_h \in V_h.
\end{equation}
Note that for the implementation of this method one needs sufficiently accurate quadrature on cut simplices $T \cap \Omega$ and on the boundary segments $T \cap \Gamma$. For the case $k=1$ sufficient accuracy is  obtained by piecewise linear approximations of the boundary. For higher order $k \geq 2$ the efficient numerical realization of a sufficiently accurate  quadrature is not straightforward and variants have been developed that lead to optimal order bounds \cite{burmanhansboBVcorrection,lehrenfeldfictdomain}. For the analysis of this paper this ``geometric error'' does not play an essential role. Therefore we assume that the \emph{integrals in} \eqref{defah} \emph{are determined exactly}. In the literature \emph{optimal order discretization bounds} for the method \eqref{discrete} have been derived \cite{burmanhansbo12}, provided $\gamma > 0$ is taken sufficiently large.  In the remainder we assume that the latter holds.
\begin{remark}\label{rem1} \rm
 We briefly discuss the key ingredients needed for  the derivation of optimal discretization error bounds.
 Firstly, the following two trace inequalities are used:
 \begin{align}
   \|v\|_{\partial T} & \leq c (h_T^{-\frac12} \|v\|_T + h_T^\frac12 \|\nabla v\|_T), \quad v \in H^1(T), \label{trace1} \\ \|v\|_{T \cap \Gamma} & \leq c (h_T^{-\frac12} \|v\|_T + h_T^\frac12 \|\nabla v\|_T), \quad v \in H^1(T). \label{trace2}
 \end{align}
The constant $c$ in \eqref{trace2} is independent of how $\Gamma$ cuts the simplex $T \in \cTG$. Related to the effect of the ghost penalty stabilization we recall the following result \cite{Massing2014}[Lemma 5.1]. Let $T_1,T_2 \in \T_h$ be two simplices sharing a common face $F$. Let $v$ be a piecewise polynomial function relative to the macro-element $T_1 \cup T_2$ and let $v_i$ denote the restriction of $v$ to $T_i$, $i=1,2$. Then there is a constant $C$ depending only on the shape regularity of $\T_h$ and on $p:=\max\{{\rm deg}(v_1),{\rm deg}(v_2)\}$ such that
\begin{equation} \label{ghostestimate}
  \|v\|_{T_1}^2 \leq C\big(\|v\|_{T_2}^2 + \sum_{j \leq p} h_F^{2 j+1} \|\jump{\partial_n^j v}^2\|_F^2 \big).
\end{equation}
Furthermore, in the derivation of discretization error bounds one uses standard finite element inverse inequalities and optimal interpolation error estimates for the (standard) finite element space $V_h$. 
\end{remark}

In the discretization error analysis one uses a norm induced by the bilinear form
\begin{equation} \label{defb}
  b_h(u,v):=(\nabla u, \nabla v)_{\Omega_h} + \gamma (h^{-1} u,v)_\Gamma,
\end{equation}
which defines a scalar product on $H^1(\Omega_h)$ that is equivalent to the standard scalar product on this space. 
Note that in \eqref{defb} we use an integral over $\Omega_h$, whereas in the definition of $a_h(\cdot,\cdot)$ in \eqref{defah} an integral over $\Omega$ occurs.
The bilinear form $b_h(\cdot,\cdot)$ defines a norm on $V_h$ denoted by $\|\cdot\|_b$. A key result in the discretization error analysis \cite{burmanhansbo12,Massing2014}, which is derived using \eqref{trace1}, \eqref{trace2} and \eqref{ghostestimate}, is the following:
\[
 \|v_h\|_b^2 \lesssim a_h(v_h,v_h) \lesssim \|v_h\|_b^2, \quad \text{for all}~ v_h \in V_h. 
 \]
Here and in the remainder of the paper we use the standard notation: the constant in $\lesssim$ is independent of $h$ and of how $\Gamma$ cuts the triangulation. We use the notation $\|v_h\|_a:=a_h(v_h,v_h)^\frac12$. Hence, we have the \emph{uniform norm equivalence}
\begin{equation} \label{normeq1}
 \|v_h\|_b  \sim \|v_h\|_a \quad \text{for all}~ v_h \in V_h. 
 \end{equation} 
On $V_h$ we use the nodal finite element basis, denoted by $(\phi_i)_{1 \leq i \leq N}$. For $u_h, v_h \in V_h$ we have representations $u_h=\sum_{i=1}^N u_i \phi_i$, $v_h=\sum_{i=1}^N v_i \phi_i$. We introduce the notation $\bu=(u_1,\ldots,u_N)^T$, $\bv:=(v_1,\ldots,v_N)^T$. The discrete problem \eqref{discrete} leads to a linear system of the form
\begin{equation} \label{linsystem}
 \bA \bu = \mathbf{b}, \quad \text{with}~~ \bA_{ij}=a_h(\phi_i,\phi_j), \quad 1 \leq i,j \leq N.
\end{equation}
In the remainder of the paper we introduce and analyze an optimal preconditioner for the stiffness matrix $\bA$. This preconditioner is based on a subspace splitting that we study in the next section.

\section{Stable subspace splitting} \label{sectsubspace}
\begin{figure}
  \newcommand{\triang}[4]{\filldraw[#4] (#1.center) -- (#2.center) -- (#3.center) -- cycle; }
  \begin{center}
  \begin{tikzpicture}
    [scale=0.75,
     inner vert/.style={circle,draw=blue!50,fill=red!20},
     bnd vert/.style={rectangle,draw=black,fill=cyan!20},
     inner triang/.style={thick,draw=black,fill=red!10},
     bnd triang/.style={thick,draw=black,fill=cyan!10}
    ]
    \node (e1) at (-2,4) [bnd vert] {};
    \node (e2) at (0,4) [bnd vert] {};
    \node (e3) at (2,4) [bnd vert] {};
    \node (d1) at (-3,2) [bnd vert] {};
    \node (d2) at (-1,2) [inner vert] {};
    \node (d3) at (1,2) [inner vert] {};
    \node (d4) at (3,2) [bnd vert] {};
    \node (c1) at (-4,0) [bnd vert] {};
    \node (c2) at (-2,0) [inner vert] {};
    \node (c3) at (0,0) [inner vert] {};
    \node (c4) at (2,0) [inner vert] {};
    \node (c5) at (4,0) [bnd vert] {};
    \node (b1) at (-3,-2) [bnd vert] {};
    \node (b2) at (-1,-2) [inner vert] {};
    \node (b3) at (1,-2) [inner vert] {};
    \node (b4) at (3,-2) [bnd vert] {};
    \node (a1) at (-2,-4) [bnd vert] {};
    \node (a2) at (0,-4) [bnd vert] {};
    \node (a3) at (2,-4) [bnd vert] {};

    \draw[ultra thick,purple] (0,0.5) circle [radius=3]
       ++(2,2.7) node[purple]  {$\Gamma$};
    
    \begin{scope}[on background layer]
      \def\innernodes{b3, b2, c2, d2, d3, c4};
      \foreach \n [remember=\n as \nlast (initially c4)] in \innernodes {
        \triang{\nlast}{\n}{c3}{inner triang}      
      }
      \def\bndnodes{b3/b4/a3, b2/a2/a1, c2/b1/c1, d2/d1/e1, d3/e2/e3, c4/d4/c5};
      \foreach \n / \bn / \bbn 
        [remember=\n as \nlast (initially c4)
        ,remember=\bbn as \bbnlast (initially c5)] in \bndnodes 
      {
          \triang{\n}{\bn}{\nlast}{bnd triang}   
          \triang{\n}{\bn}{\bbn}  {bnd triang}      
          \triang{\nlast}{\bn}{\bbnlast}  {bnd triang}      
      }
      \foreach \n [remember=\n as \nlast (initially c4)] in \innernodes
        \draw[red,ultra thick] (\n) -- (\nlast);
    \end{scope}
    \node[red] at (1,0.5) {$\Omega_h^0$};
    \node[blue] at (-1,-3.3) {$\Omega_h^\Gamma$};
  \end{tikzpicture}
  \end{center}
  \caption{Sketch of interface $\Gamma$ and triangulation $\T_h$ of $\Omega_h$ with interior nodes $\xi_i$, $1\leq i \leq N_0$ (red circles) and boundary nodes $\xi_i$, $N_0+1\leq i \leq N$ (blue squares). The blue triangles form the triangulation $\T_h^\Gamma$ of $\Omega_h^\Gamma$, the red triangles form the triangulation of $\Omega_h^0$. The edges between the red and blue triangles form the interior boundary $\partial\Omega_h^0$.}
  \label{fig:sketch}
\end{figure}

In this section we introduce a  natural splitting of the space $V_h$. The finite element nodes corresponding to the nodal basis functions $\phi_i$ are denoted by $\xi_i$, $1 \leq i \leq N$. We choose the ordering such that $\xi_i$, $1 \leq i \leq N_0,$ are all nodes in the interior of $\Omega_h$ and $\xi_i$, $N_0+ 1 \leq i \leq N$, are the boundary nodes, i.e., $\xi_i \in \partial \Omega_h$ iff $i > N_0$, cf. Figure~\ref{fig:sketch}. The corresponding splitting is
\begin{equation} \label{splitting} 
 \begin{split}
    V_h & = V_h^0 \oplus V_h^\Gamma\\
    V_h^0 & := {\rm span}\{\, \phi_i~|~ 1 \leq i \leq N_0\,\} = \{ \, v_h \in V_h~|~{v_h}_{|\partial \Omega_h}=0\,\} \\
    V_h^\Gamma & := {\rm span}\{\, \phi_i~|~ N_0+1 \leq i \leq N\,\}.
 \end{split}
\end{equation}
Below we show that this splitting is stable (Theorem~\ref{thmmain1}). This is essentially based on the following elementary observation. We split the boundary of the boundary strip $\Omega_h^\Gamma$ into two disjoint parts, $\partial \OG = \partial \Omega_h \cup \partial \Omega_h^0$, with $\partial \Omega_{h}^0 \subset \Omega$, cf. Figure~\ref{fig:sketch}. Note that the following holds:
\begin{equation} \label{elprop}
 \begin{split}
  v_h \in V_h^0 ~&\Rightarrow~~v_h=0 \quad \text{on}~\partial\Omega_h \\
  v_h \in V_h^\Gamma~& \Rightarrow ~~v_h=0 \quad \text{on}~ \partial \Omega_{h}^0.
 \end{split}
\end{equation}
\begin{lemma} \label{lemma1}
 The following uniform norm equivalence holds:
 \begin{equation} \label{normeq}
  \|h^{-1} v_h\|_{\OG} \sim \|\nabla v_h\|_{\OG} \quad \text{for all}~v_h \in V_h^0 \cup V_h^\Gamma.
 \end{equation}
\end{lemma}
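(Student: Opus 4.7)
My plan is to prove the two inequalities separately by element-wise scaling arguments on each simplex $T \in \cTG$, with the subspace constraint \eqref{elprop} entering only in the lower bound.

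The upper bound $\|\nabla v_h\|_{\OG} \lesssim \|h^{-1} v_h\|_{\OG}$ is the classical local inverse estimate $\|\nabla v_h\|_T \lesssim h_T^{-1}\|v_h\|_T$, valid for every $v_h \in V_h$ via pull-back to the reference simplex; squaring and summing over $T \in \cTG$ yields this direction.

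For the reverse inequality $\|h^{-1} v_h\|_{\OG} \lesssim \|\nabla v_h\|_{\OG}$, I would prove the element-wise bound $\|v_h\|_T \lesssim h_T \|\nabla v_h\|_T$ on each $T \in \cTG$ and then square and sum. The key is that \eqref{elprop} combined with \eqref{ass1} produces a nodal point of $T$ at which $v_h$ vanishes: for $v_h \in V_h^0$, the intersection $T \cap \partial \Omega_h$ is nonempty by \eqref{ass1}, and since $T$ is a closed $d$-simplex of $\T_h^\ast$ while $\partial \Omega_h$ is a union of closed $(d-1)$-faces of such simplices, this intersection must contain at least one vertex of $T$; symmetrically, for $v_h \in V_h^\Gamma$ one checks that each $T \in \cTG$ carries at least one nodal point not on $\partial \Omega_h$. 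Given such a node $\xi \in T$ with $v_h(\xi)=0$, the subspace of $\Pol_k$ on $T$ consisting of polynomials vanishing at $\xi$ contains no nonzero constant, so on it the $H^1$-seminorm is a norm equivalent to the $L^2$-norm by finite-dimensionality; pulling back to the reference simplex and rescaling converts this to $\|v_h\|_T \lesssim h_T \|\nabla v_h\|_T$ with a constant depending only on shape regularity and $k$.

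The main technical obstacle I foresee is verifying the existence of such a vanishing node in every $T \in \cTG$ in the $V_h^\Gamma$ case, particularly for low-order elements: a pathological simplex with all its nodes on $\partial \Omega_h$ (for example a $\Pol_1$ simplex all of whose vertices happen to lie on the polygonal fictitious boundary) admits no local vanishing node, and the purely element-wise Poincar\'e inequality breaks. In that situation one should replace the element-wise bound by a patchwise one: shape regularity together with the one-layer structure of $\cTG$ ensures that $T$ is a face-neighbor of some simplex that does carry an interior node where $v_h = 0$, so a Poincar\'e inequality on a patch of bounded simplicial size, combined with finite overlap, restores the global estimate.
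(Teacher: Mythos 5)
Your proof is correct and follows essentially the same route as the paper: the inverse inequality for one direction, and for the other a node of $T$ at which $v_h$ vanishes combined with the equivalence of the $L^2$-norm and the $H^1$-seminorm on the subspace of $\Pol_k$ vanishing at a fixed point of the reference simplex, then scaling and summing over $T\in\cTG$. The only difference is that the paper disposes of your ``pathological simplex'' concern by asserting, as part of the construction (using assumption \eqref{ass1}), that every $T\in\cTG$ has at least one vertex on $\partial\Omega_h$ \emph{and} at least one vertex on $\partial\Omega_h^0$, which via \eqref{elprop} always supplies the vanishing vertex in both cases, so no patchwise Poincar\'e fallback is invoked there.
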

\begin{proof}
 The estimate in the one direction directly follows from a standard finite element inverse inequality:
 \[
   \|\nabla v_h\|_{\OG}^2 = \sum_{T\in\cTG}\|\nabla v_h\|_{T}^2 \lesssim  \sum_{T\in\cTG} h_T^{-2}\| v_h\|_{T}^2 = \|h^{-1} v_h\|_{\OG}^2 \quad \text{for all}~v_h \in V_h.
 \]
Take $v_h \in V_h^0 \cup V_h^\Gamma$ and $T \in \cTG$. By construction $T$ has at least one vertex on $\partial \Omega_h$ and at least one  vertex on $\partial \Omega_{h}^0$. Using this and \eqref{elprop} it follows that there is vertex of $T$, denoted by $x_\ast$, at which $v_h(x_\ast)=0$ holds.
Let $\hat T$ be the unit simplex and $ F: \hat T \to T$ the affine transformation with $F(0)=x_\ast$. Define $Z:=\{\, p \in \mathcal{P}_k~|~p(0)=0\,\}$ and note that $p \to \|p\|_{\hat T}$ and $p \to \|\nabla p\|_{\hat T}$ define equivalent norms on $Z$. Due to $\hat v_h:=v_h \circ F \in Z$ and this norm equivalence we obtain
\[
  \|v_h\|_T^2 = |T| \|\hat v_h\|_{\hat T}^2 \lesssim |T| \|\nabla \hat v_h\|_{\hat T}^2 \lesssim h_T^2 \|\nabla v_h\|_T^2,
\]
and thus
\[
 \|h^{-1} v_h\|_{\OG}^2 = \sum_{T \in \cTG} h_T^{-2} \|v_h\|_T^2 \lesssim \sum_{T\in \cTG} \|\nabla v_h\|_T^2=\|\nabla v_h\|_{\OG}^2,
\]
which is this estimate in the other direction.
\end{proof}
\ \\
\begin{corollary} \label{corolmain}
 The following uniform norm equivalences hold:
 \begin{align} \label{normeq2}
 \|v_h\|_b &\sim \|\nabla v_h\|_{\OG} \sim \|h^{-1} v_h\|_{\OG} \quad \text{for all}~v_h \in  V_h^\Gamma \\
 \|v_h\|_b &\sim \|\nabla v_h\|_{\Omega_h}\quad \text{for all}~v_h \in  V_h^0.\label{normeq3}
 \end{align}
\end{corollary}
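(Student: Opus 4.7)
The plan is to treat the two cases separately, using Lemma~\ref{lemma1} together with the trace inequality \eqref{trace2} to handle the Nitsche boundary term $\gamma\|h^{-1/2}v_h\|_\Gamma^2$ appearing in $\|v_h\|_b^2 = \|\nabla v_h\|_{\Omega_h}^2 + \gamma\|h^{-1/2}v_h\|_\Gamma^2$. In both cases one direction of the equivalence is the trivial $\|\nabla v_h\|_\omega \le \|v_h\|_b$ (for $\omega\subset\Omega_h$), so the real content is in the reverse direction.

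For \eqref{normeq2}, the key preliminary observation is a support property: any basis function $\phi_i$ with $i>N_0$ is associated with a node $\xi_i\in\partial\Omega_h$, so every simplex in its support intersects $\partial\Omega_h$ and hence lies in $\cTG$ by assumption \eqref{ass1}. Consequently every $v_h\in V_h^\Gamma$ is supported in $\OG$, giving $\|\nabla v_h\|_{\Omega_h}=\|\nabla v_h\|_\OG$. Applying \eqref{trace2} element-wise (noting $\Gamma\subset\OG$) yields
\[
 \|h^{-1/2}v_h\|_\Gamma^2 \lesssim \sum_{T\in\cTG}\bigl(h_T^{-2}\|v_h\|_T^2 + \|\nabla v_h\|_T^2\bigr) = \|h^{-1}v_h\|_\OG^2 + \|\nabla v_h\|_\OG^2,
\]
and Lemma~\ref{lemma1} (applied to $V_h^\Gamma$) collapses the right-hand side to $\|\nabla v_h\|_\OG^2$. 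Combining this with $\|\nabla v_h\|_\OG^2$ gives $\|v_h\|_b^2\lesssim \|\nabla v_h\|_\OG^2$, which together with the trivial bound and Lemma~\ref{lemma1} closes the chain $\|v_h\|_b \sim \|\nabla v_h\|_\OG \sim \|h^{-1}v_h\|_\OG$.

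For \eqref{normeq3}, we do not have the support property (a function in $V_h^0$ can be nonzero on the boundary strip $\OG$), but the same trace estimate still gives
\[
 \|h^{-1/2}v_h\|_\Gamma^2 \lesssim \|h^{-1}v_h\|_\OG^2 + \|\nabla v_h\|_\OG^2.
\]
Since $v_h\in V_h^0$ vanishes on $\partial\Omega_h$, Lemma~\ref{lemma1} applies and yields $\|h^{-1}v_h\|_\OG \lesssim \|\nabla v_h\|_\OG$. Hence $\|h^{-1/2}v_h\|_\Gamma^2\lesssim\|\nabla v_h\|_\OG^2\le\|\nabla v_h\|_{\Omega_h}^2$, producing $\|v_h\|_b\lesssim\|\nabla v_h\|_{\Omega_h}$.

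The only subtle point, and the step most easily overlooked, is the support argument for $V_h^\Gamma$ that lets one pass freely between $\|\nabla v_h\|_\OG$ and $\|\nabla v_h\|_{\Omega_h}$ and also makes the gradient-only norm equivalent to $\|\cdot\|_b$ despite the absence of a Friedrichs-type argument; this relies crucially on assumption \eqref{ass1}. Everything else reduces to Lemma~\ref{lemma1} plus the cut-trace inequality \eqref{trace2}, so no geometry-dependent constants enter.
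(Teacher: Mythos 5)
Your proof is correct and follows essentially the same route as the paper: the support observation $v_h=0$ on $\Omega_h^0$ for $v_h\in V_h^\Gamma$ (a consequence of assumption \eqref{ass1}), the cut-trace inequality \eqref{trace2} to control the Nitsche term $\|h^{-1/2}v_h\|_\Gamma$, and Lemma~\ref{lemma1} to bound the scaled $L^2$ term by the gradient, reused verbatim for the $V_h^0$ case. The only cosmetic difference is that the paper first absorbs the gradient contribution of the trace bound via a finite element inverse inequality before invoking Lemma~\ref{lemma1}, whereas you keep both terms and apply the lemma directly; the content is identical.
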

\begin{proof}
 The second norm equivalence  in \eqref{normeq2} is the result of Lemma~\ref{lemma1}.
 For $ v_h \in V_h^\Gamma$ we have $v_h=0$ on $\Omega_h^0$, hence, $\|v_h\|_b^2= \|\nabla v_h\|_{\Omega_h}^2 + \|h^{-\frac12} v_h\|_\Gamma^2=\|\nabla v_h\|_{\OG}^2 + \|h^{-\frac12} v_h\|_\Gamma^2$. Using this, the estimate $ \|\nabla v_h\|_{\OG} \leq \|v_h\|_b$ is trivial. For the estimate in the other direction we use \eqref{trace2}, a standard finite element inverse estimate and Lemma~\ref{lemma1}:
 \begin{equation} \label{hest1} \begin{split}
  \|h^{-\frac12} v_h\|_\Gamma^2  & = \sum_{T \in \cTG} h_T^{-1} \|v_h\|_{T\cap \Gamma}^2 \lesssim \sum_{T \in \cTG} h_T^{-2} \|v_h\|_T^2 + \|\nabla v_h\|_T^2  \\ & \lesssim
  \sum_{T \in \cTG} h_T^{-2}\|v_h\|_T^2 = \|h^{-1} v_h\|_{\OG}^2 \lesssim \|\nabla v_h\|_{\OG}^2. 
 \end{split} \end{equation}
 Combining this proves the first norm equivalence in \eqref{normeq2}. We now consider \eqref{normeq3}. The estimate in one direction is trivial. Note that the result \eqref{hest1} holds also for $v_h \in V_h^0$. Using this we obtain the estimate in the other direction in \eqref{normeq3}.
\end{proof}
\ \\[1ex]
From the result in Lemma~\ref{lemma1} it follows that for $v_h \in (V_h^0 \cup V_h^\Gamma) \subset V_h$ the scaled $L^2$-norm $\|h^{-1} v_h\|_{\OG}$ is uniformly bounded by $\|\nabla v_h\|_{\OG}$. We need one further result in which for arbitrary $v_h \in V_h$ a suitable bound of this scaled $L^2$-norm is derived.
This result is very similar to results known from the literature, cf. Remark~\ref{Remsimilar}. We introduce, for $T \in \cTG$, the subdomain consisting of all simplices  in $\cTG$ that have at least a common vertex with $T$, i.e., $\omega_T:= \{\, \tilde T \in \cTG~|~\tilde T \cap T \neq \emptyset \,\}$. Note that due to shape regularity we have $h_{\tilde T} \sim h_T$ for $\tilde T \in \omega_T$ and ${\rm diam}(\omega_T) \sim h_T$. 
\begin{lemma} \label{lemma2}
For arbitrary $T \in \cTG$ the following holds:
\begin{equation}
 h_T^{-2} \|v_h\|_T^2 \lesssim \sum_{\tilde T \in \omega_T} h_{\tilde T}^{-1} \|v_h\|_{\tilde T \cap \Gamma}^2 + \|\nabla v_h\|_{\omega_T}^2 \quad \text{for all}~v_h \in V_h.
\end{equation}
 The constant in $\lesssim$ depends only on shape regularity of $\T_h$, smoothness of $\Gamma$ and the polynomial degree $k$ used in $V_h$.
\end{lemma}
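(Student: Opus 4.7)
The plan is to prove the scaled Poincaré-type estimate
\[
 h_T^{-2}\|v_h\|_{\omega_T}^2 \lesssim h_T^{-1}\|v_h\|_{\Gamma \cap \omega_T}^2 + \|\nabla v_h\|_{\omega_T}^2,
\]
from which the claim is immediate: $T \subset \omega_T$, $h_{\tilde T} \sim h_T$ for all $\tilde T \in \omega_T$ by shape regularity, and $\Gamma \cap \omega_T = \cup_{\tilde T \in \omega_T}(\tilde T \cap \Gamma)$, so the right-hand side is equivalent to $\sum_{\tilde T \in \omega_T} h_{\tilde T}^{-1}\|v_h\|_{\tilde T \cap \Gamma}^2 + \|\nabla v_h\|_{\omega_T}^2$.

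First I would establish the geometric lower bound $|\Gamma \cap \omega_T|_{d-1} \gtrsim h_T^{d-1}$, which requires $h$ to be small enough that the curvature of $\Gamma$ is resolved. Since $T \in \cTG$ is cut by $\Gamma$ and $\omega_T$ contains a shape-regular neighborhood of $T$ of diameter $\sim h_T$, smoothness of $\Gamma$ ensures that $\Gamma \cap \omega_T$ is essentially a flat hypersurface of extent $h_T$; the implied constant depends only on the smoothness of $\Gamma$ and on shape regularity of $\T_h$, and in particular is independent of how $\Gamma$ intersects the individual simplices of $\omega_T$.

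Next I would invoke a Poincaré--Wirtinger inequality on $\omega_T$ with mean taken over $\Gamma \cap \omega_T$. Writing $\bar v := |\Gamma \cap \omega_T|_{d-1}^{-1}\int_{\Gamma \cap \omega_T} v_h$, I split $v_h = (v_h-\bar v) + \bar v$ and estimate
\[
 \|v_h-\bar v\|_{\omega_T}^2 \lesssim h_T^2 \|\nabla v_h\|_{\omega_T}^2, \qquad \|\bar v\|_{\omega_T}^2 \le \frac{|\omega_T|}{|\Gamma \cap \omega_T|_{d-1}}\,\|v_h\|_{\Gamma\cap\omega_T}^2 \lesssim h_T \|v_h\|_{\Gamma \cap \omega_T}^2,
\]
where the second bound uses $|\omega_T|\sim h_T^d$ together with the lower bound of the previous step. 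Adding the two estimates and dividing by $h_T^2$ yields the desired scaled Poincaré estimate.

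The hard part is justifying the Poincaré--Wirtinger inequality uniformly across admissible configurations. The patch $\omega_T$ is a union of shape-regular simplices sharing at least one vertex with $T$, so it need not be a Lipschitz domain (pinch-points at shared vertices are possible), and simultaneously the trace set $\Gamma \cap \omega_T$ varies with the cut. I would handle this by rescaling $\omega_T$ and $\Gamma \cap \omega_T$ by $h_T^{-1}$: shape regularity and smoothness of $\Gamma$ parametrize the resulting reference configurations by a compact set (up to combinatorial type), so a standard compactness argument delivers a uniform constant. Alternatively, exploiting the continuity of $v_h \in V_h$ across shared faces, one can replace the patch-wise Poincaré argument by a chain argument using simplex-wise polynomial Poincaré and trace inequalities, whose constants depend only on $k$ and on shape regularity; this matches exactly the dependence asserted in the lemma.
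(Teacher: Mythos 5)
Your overall strategy can probably be pushed through, but the main route you propose has a genuine gap exactly at the point you flag as ``the hard part''. The patch Poincar\'e--Wirtinger inequality with mean value over $\Gamma\cap\omega_T$ is in general \emph{false} for arbitrary $H^1$ functions on $\omega_T$, uniformly in the cut: since $\omega_T$ contains only the cut elements touching $T$, its interior may be disconnected or connected only through vertices (an element $\tilde T\in\cTG$ attached to $T$ at a single vertex, with the intermediate elements around that vertex not cut). Take such a configuration in which the pinched-off element carries an arbitrarily small piece of $\Gamma$, and let $w$ be $0$ on the component containing the ``fat'' part of $\Gamma\cap\omega_T$ and $1$ on the pinched-off element: then $\nabla w\equiv 0$, $h_T^{-1}\|w\|_{\Gamma\cap\omega_T}^2$ is arbitrarily small, while $h_T^{-2}\|w\|_{\omega_T}^2\sim h_T^{d-2}$, so no finite uniform constant exists at the $H^1$ level. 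Consequently a rescaling/compactness argument over admissible configurations cannot deliver the inequality as you state it; the piecewise-polynomial structure of $v_h$ (continuity at the shared vertex is a meaningful constraint for polynomials, but not for $H^1$ functions, since a vertex has zero capacity) is indispensable, not merely a convenient alternative. Your second option, the chain argument with element-wise polynomial norm equivalences, is the one that works --- but note that within $\omega_T$ the chaining may have to pass through shared \emph{vertices} rather than faces, i.e.\ through point values, using estimates of the type $|p(x_\ast)|^2\lesssim h^{-d}\|p\|_{T'}^2$ for fixed degree $k$. Also, the geometric input $|\Gamma\cap\omega_T|\gtrsim h_T^{d-1}$ is only asserted heuristically; the paper takes the sharper per-element version (some $\tilde T\in\omega_T$ with $|\tilde T\cap\Gamma|\ge c_0 h_{\tilde T}^{d-1}$) from \cite[Proposition 4.2]{DemOlsh}, and a proof or citation of this fact is needed, since it is precisely where smoothness of $\Gamma$ enters.

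For comparison, the paper's proof avoids any Poincar\'e-on-patches machinery: it picks the fat-cut element $\tilde T$ from \cite{DemOlsh}, lets $\xi\in\tilde T\cap\Gamma$ be a maximizer of $|v_h|$ on $\tilde T\cap\Gamma$, connects an arbitrary $x\in T$ to $\xi$ by a curve $S\subset\omega_T$ of length $\lesssim h_T$, and writes $v_h(x)=v_h(\xi)+\int_S\partial_s v_h\,ds$; combining this with the inverse estimates $\|\nabla v_h\|_{\infty,\omega_T}^2\lesssim h_T^{-d}\|\nabla v_h\|_{\omega_T}^2$ and $\|v_h\|_{\infty,\tilde T\cap\Gamma}^2\lesssim h_{\tilde T}^{1-d}\|v_h\|_{\tilde T\cap\Gamma}^2$ gives the lemma directly. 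This uses only connectivity of $\omega_T$ and the fact that $v_h$ is a continuous piecewise polynomial, so it is in essence the chain argument you sketch as a fallback; as written, however, your primary compactness-based justification does not close the proof.
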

\begin{proof}
 Take $T \in \cTG$, $v_h \in V_h$. The area $|T\cap \Gamma |$ can be arbitrary small (``small cuts''), but it follows from \cite[Proposition 4.2]{DemOlsh} that there is an element $\tilde T \in \omega_T$ such that $|\tilde T \cap \Gamma| \geq c_0 h_{\tilde T}^{d-1}$, with  a constant $c_0 >0$ that depends only on shape regularity of $\cTG$ and on smoothness of $\Gamma$. Take such an $\tilde T \in \omega_T$. Take a fixed  $\xi \in  \Gamma \cap  \tilde T$ such  that $|v_h(\xi)|= \max_{x \in \tilde T \cap \Gamma} |v_h(x)| =: \|v_h\|_{\infty,\tilde T \cap \Gamma}$. Take $x \in T $ and let $S$ be a smooth shortest curve in $\omega_T$ that connects $x$ and $\xi$. Due to shape regularity we have $|S| \lesssim h_T$, independent of $x$. This yields
 \[
  v_h(x)= v_h(\xi) + \int_S\frac{\partial v_h}{\partial s} \, ds,
 \]
 with $s$ the arclength parametrization of $S$.
Hence,
\[
  v_h(x)^2 \leq  2 v_h(\xi)^2 + 2 |S|^2 \|\nabla v_h\|_{\infty,\omega_T}^2.
\]
Using integration over $T$, $|T|\sim h_T^d$ and the standard FE norm estimate $\|\nabla v_h\|_{\infty,\omega_T}^2 \lesssim h_T^{-d} \|\nabla v_h\|_{\omega_T}^2$ we get
\begin{equation} \label{est6}
 h_T^{-2} \|v_h\|_T^2 \lesssim h_T^{d-2} \|v_h\|_{\infty,\tilde T \cap \Gamma}^2 + \|\nabla v_h\|_{\omega_T}^2.
\end{equation}
Using $|\tilde T \cap \Gamma| \geq c_0 h_{\tilde T}^{d-1}$ we get
\[
  \|v_h\|_{\infty,\tilde T \cap \Gamma}^2 \lesssim h_{\tilde T}^{1-d} \|v_h\|_{\tilde T \cap \Gamma}^2,
\]
and combining this with the result \eqref{est6} and $h_{\tilde T} \sim h_T$ completes the proof.
\end{proof}
\ \\
By summing over $T \in \cTG$ and using a standard finite overlap argument we obtain the following result.
\begin{corollary} \label{corolA}
 The following uniform estimate holds:
 \begin{equation} \label{estfund}
   \|h^{-1} v_h \|_{\OG} \lesssim \|h^{-\frac12} v_h\|_\Gamma + \|\nabla v_h\|_{\OG} \quad \text{for all}~v_h \in V_h.
 \end{equation}
\end{corollary}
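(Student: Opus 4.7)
The plan is to derive the corollary by simply summing the element-wise bound of Lemma~\ref{lemma2} over all $T \in \cTG$, controlled by a standard finite overlap argument. Summing the left-hand side of Lemma~\ref{lemma2} over $T \in \cTG$ yields exactly $\|h^{-1} v_h\|_{\OG}^2$ by definition.

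For the right-hand side, the key observation is that shape regularity of $\T_h$ implies that each simplex $\tilde T \in \cTG$ belongs to $\omega_T$ for at most a bounded number $M$ of elements $T \in \cTG$, the constant $M$ depending only on the shape regularity constant and the dimension $d$. This finite overlap property lets me swap the order of summation and obtain
\[
 \sum_{T \in \cTG} \sum_{\tilde T \in \omega_T} h_{\tilde T}^{-1} \|v_h\|_{\tilde T \cap \Gamma}^2 \lesssim \sum_{\tilde T \in \cTG} h_{\tilde T}^{-1} \|v_h\|_{\tilde T \cap \Gamma}^2 = \|h^{-\frac12} v_h\|_\Gamma^2,
\]
where the final equality is just the element-wise definition of the scaled $L^2$-norm on $\Gamma$ recalled earlier in the paper. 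By the same overlap argument,
\[
 \sum_{T \in \cTG} \|\nabla v_h\|_{\omega_T}^2 \lesssim \sum_{\tilde T \in \cTG} \|\nabla v_h\|_{\tilde T}^2 = \|\nabla v_h\|_{\OG}^2.
\]

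Putting these three estimates together yields $\|h^{-1} v_h\|_{\OG}^2 \lesssim \|h^{-\frac12} v_h\|_\Gamma^2 + \|\nabla v_h\|_{\OG}^2$, and taking square roots (using $\sqrt{a+b} \le \sqrt{a}+\sqrt{b}$) produces the claimed inequality. There is no real obstacle: all the work sits in Lemma~\ref{lemma2} (which already exploited the non-degenerate cut from \cite{DemOlsh} to handle small cuts). The only care needed here is to verify explicitly that the patches $\{\omega_T\}_{T \in \cTG}$ have bounded overlap, which follows because $\tilde T \in \omega_T$ forces $T$ to share a vertex with $\tilde T$, and shape regularity bounds the number of simplices meeting at any vertex.
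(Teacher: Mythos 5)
Your proposal is correct and matches the paper's argument exactly: the paper obtains Corollary~\ref{corolA} precisely by summing the bound of Lemma~\ref{lemma2} over $T \in \cTG$ and invoking a standard finite overlap argument, which you have simply spelled out in detail (including the vertex-sharing/shape-regularity justification of bounded overlap).
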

\ \\
\begin{remark}\label{Remsimilar}
 \rm As noted above, similar results are known in the literature. For example, in the papers \cite{burmanembedded,grande2017higher}, for the case of a \emph{quasi-uniform} triangulation the following uniform estimate is derived:
 \begin{equation} \label{compresult}
  \|v_h\|_{\OG} \lesssim h^\frac12 \|v_h\|_{\Gamma} + h \|n \cdot \nabla v_h\|_{\OG}.
 \end{equation}
Note that due to the quasi-uniformity assumption we have a simpler scaling with the global mesh parameter $h$ and that in \eqref{compresult} we  have the \emph{normal} derivative term $\|n \cdot \nabla v_h\|_{\OG}$, with $n$ the normal on $\Gamma$ (constantly extended in the neighborhood $\OG$) instead of the full derivative term $\|\nabla v_h\|_{\OG}$. The proofs of \eqref{compresult} in \cite{burmanembedded,grande2017higher}  are much more involved than the simple proof of Lemma~\ref{lemma2} above. This is due to the fact that in the bound in \eqref{compresult} only the normal derivative occurs.
\end{remark}
\ \\
We need the following elementary estimate.
\begin{lemma} \label{lemma3}
 Let $M \in \mathbb{R}^{m\times m}$ be symmetric positive definite and $\kappa(M):=\|M\|_2\|M^{-1}\|_2$ the spectral condition number. For all $\bx, \by \in \mathbb{R}^m$ with $\langle \bx, \by\rangle = \bx^T \by=0$ the following holds:
 \[
   |\langle M \bx, \by \rangle | \leq \Big(1- \frac{1}{\kappa(M)}\Big) \langle M \bx, \bx \rangle^\frac12 \langle M \by, \by \rangle^\frac12.
 \]
 \end{lemma}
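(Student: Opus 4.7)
The plan is to exploit the orthogonality $\bx^T\by = 0$ through a shift of $M$ by its minimal eigenvalue, which eliminates the would-be obstruction and turns the estimate into a standard Cauchy--Schwarz for a positive semidefinite form. Let $0 < \lambda_{\min} \leq \lambda_{\max}$ denote the extreme eigenvalues of $M$, so that $\kappa(M)=\lambda_{\max}/\lambda_{\min}$.

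First I would introduce the shifted matrix $\tilde M := M - \lambda_{\min} I$, which is symmetric positive semidefinite. Since $\bx^T\by = 0$, we have
\[
 \langle M \bx, \by\rangle = \langle \tilde M \bx, \by\rangle + \lambda_{\min}\langle \bx,\by\rangle = \langle \tilde M \bx, \by\rangle.
\]
This is the crucial use of orthogonality: the shift costs nothing on the right-hand side of the pairing.

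Next, I would apply the ordinary Cauchy--Schwarz inequality to the symmetric positive semidefinite bilinear form induced by $\tilde M$:
\[
 |\langle \tilde M \bx, \by\rangle| \leq \langle \tilde M \bx, \bx\rangle^{1/2}\,\langle \tilde M \by, \by\rangle^{1/2}.
\]
Then I need to relate $\langle \tilde M \bz,\bz\rangle$ to $\langle M\bz,\bz\rangle$ for a generic vector $\bz \in \mathbb{R}^m$. Using $\lambda_{\min} \|\bz\|^2 \geq \lambda_{\min}\lambda_{\max}^{-1}\langle M \bz,\bz\rangle = \kappa(M)^{-1}\langle M\bz,\bz\rangle$ (which follows from $M \leq \lambda_{\max} I$), we obtain
\[
 \langle \tilde M \bz,\bz\rangle = \langle M\bz,\bz\rangle - \lambda_{\min}\|\bz\|^2 \leq \Bigl(1 - \tfrac{1}{\kappa(M)}\Bigr)\langle M\bz,\bz\rangle.
\]

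Finally I would apply this bound with $\bz = \bx$ and with $\bz = \by$, and insert into the intermediate Cauchy--Schwarz estimate, which yields precisely
\[
 |\langle M\bx,\by\rangle| \leq \Bigl(1 - \tfrac{1}{\kappa(M)}\Bigr)\langle M\bx,\bx\rangle^{1/2}\,\langle M\by,\by\rangle^{1/2},
\]
as required. There is no real obstacle; the only nontrivial idea is the shift by $\lambda_{\min}$, which is what lets orthogonality do its work and is what makes the sharp factor $1 - 1/\kappa(M)$ appear.
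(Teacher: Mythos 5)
Your proof is correct: the shift $\tilde M = M-\lambda_{\min}I$ is positive semidefinite, the orthogonality $\bx^T\by=0$ makes the shift free in the pairing, the semidefinite Cauchy--Schwarz inequality applies, and the bound $\langle \tilde M\bz,\bz\rangle \le (1-1/\kappa(M))\langle M\bz,\bz\rangle$ follows from $M\le \lambda_{\max}I$, giving exactly the stated constant. At its core this is the same mechanism as the paper's proof, but the execution differs: the paper diagonalizes $M=V\Lambda V^T$, uses $\hat x_1\hat y_1=-\sum_{i\ge 2}\hat x_i\hat y_i$ to rewrite $\sum_i\lambda_i\hat x_i\hat y_i$ as $\sum_{i\ge 2}(\lambda_i-\lambda_1)\hat x_i\hat y_i$ (which is precisely your shift by $\lambda_{\min}I$ expressed in the eigenbasis), and then bounds this by pulling out $\max_{i\ge 2}(\lambda_i-\lambda_1)/\lambda_i = 1-1/\kappa(M)$ followed by a weighted discrete Cauchy--Schwarz. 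Your version is coordinate-free: it needs no eigenvector decomposition, only the operator inequalities $\lambda_{\min}I\preceq M\preceq\lambda_{\max}I$ and Cauchy--Schwarz for a semidefinite form, so it transfers verbatim to self-adjoint positive operators on any inner-product space and makes transparent where the factor $1-1/\kappa(M)$ originates; the paper's componentwise argument is equally elementary but ties the estimate to the spectral decomposition. Both yield the identical constant, so nothing is lost or gained quantitatively.
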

\begin{proof}
 Let $MV=V\Lambda$, with $\Lambda={\rm diag}(\lambda_1, \ldots, \lambda_m)$, $0<\lambda_1 \leq \ldots \leq \lambda_m$, $V^TV=I$ be the eigenvector decomposition of $M$. Take $\bx, \by \in \mathbb{R}^m$ with $\langle \bx, \by\rangle=0$ and define $\hat \bx :=V^T \bx$, $\hat \by = V^T \by$. This yields $\langle \hat \bx, \hat \by \rangle =0$, i.e., $\hat x_1 \hat y_1= - \sum_{i=2}^m \hat x_i \hat y_i$. Using this we obtain
 \begin{align*}
 |\langle M \bx, \by \rangle | & =|\langle \Lambda \hat \bx, \hat \by \rangle | = |\sum_{i=1}^m\lambda_i \hat x_i \hat y_i | \\ 
 & =|\sum_{i=2}^m (\lambda_i-\lambda_1)\hat x_i \hat y_i | \leq \max_{2 \leq i \leq m} \frac{\lambda_i-\lambda_1}{\lambda_i} \sum_{i=2}^m \lambda_i |\hat x_i|| \hat y_i |\\
 & \leq \Big( 1- \frac{\lambda_1}{\lambda_m}\Big) \Big(\sum_{i=1}^m \lambda_i \hat x_i^2 \Big)^\frac12 
\Big(\sum_{i=1}^m \lambda_i \hat y_i^2 \Big)^\frac12 \\
 & = \Big( 1- \frac{1}{\kappa(M)}\Big) \langle M \bx, \bx \rangle^\frac12 \langle M \by, \by \rangle^\frac12, 
 \end{align*}
which proves the result.
\end{proof}
\ \\
Using this we obtain the following uniform strengthened Cauchy-Schwarz inequality.
\begin{lemma} \label{lemma4}
 Let $\hat M \in \mathbb{R}^{m \times m}$, $m:= \begin{pmatrix} d+ k \\ k\end{pmatrix}$, be the element mass matrix on the reference unit simplex $\hat T \subset \mathbb{R}^d$. For $ T\in \cTG$ the estimate
 \begin{equation} \label{est7}
 |(v_h^0,v_h^\Gamma)_T| \leq \Big(1- \frac{1}{\kappa(\hat M)}\Big) \|v_h^0\|_T \|v_h^\Gamma\|_T \quad \text{for all}~v_h^0 \in V_h^0, \, v_h^\Gamma \in V_h^\Gamma,
\end{equation}
holds.
\end{lemma}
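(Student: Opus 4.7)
The plan is to reduce this local estimate on the simplex $T$ to the abstract linear-algebra fact in Lemma~\ref{lemma3} by choosing the right matrix and the right coefficient vectors. Fix $T \in \cTG$ and consider the local nodal basis $\{\psi_1,\ldots,\psi_m\}$ of $\Pol_k(T)$, where $m = \binom{d+k}{k}$, ordered so that the first block corresponds to nodes of $T$ lying in the interior of $\Omega_h$ and the second block to nodes of $T$ lying on $\partial\Omega_h$. Writing $v_h^0|_T = \sum_i x_i \psi_i$ and $v_h^\Gamma|_T = \sum_i y_i \psi_i$, the crucial structural observation is that $\bx$ is supported on the first index block (its entries at boundary nodes vanish, by definition of $V_h^0$) while $\by$ is supported on the second (its entries at interior nodes vanish, by definition of $V_h^\Gamma$). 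Hence $\langle \bx,\by\rangle = 0$, which is exactly the hypothesis needed to invoke Lemma~\ref{lemma3}.

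Next I would identify the relevant symmetric positive definite matrix. Pulling back to the reference simplex via the affine map $F:\hat T \to T$, the local mass matrix satisfies $M^T_{ij} = (\psi_i,\psi_j)_T = \frac{|T|}{|\hat T|}\,\hat M_{ij}$. Thus $M^T = \frac{|T|}{|\hat T|}\hat M$ is just a positive scalar multiple of $\hat M$, so $\kappa(M^T) = \kappa(\hat M)$. Moreover
\[
 (v_h^0,v_h^\Gamma)_T = \langle M^T \bx, \by\rangle, \quad \|v_h^0\|_T^2 = \langle M^T\bx,\bx\rangle,\quad \|v_h^\Gamma\|_T^2 = \langle M^T\by,\by\rangle.
\]

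Applying Lemma~\ref{lemma3} to $M^T$ with the orthogonal pair $\bx,\by$ gives
\[
 |(v_h^0,v_h^\Gamma)_T| = |\langle M^T\bx,\by\rangle| \leq \Big(1-\tfrac{1}{\kappa(\hat M)}\Big)\langle M^T\bx,\bx\rangle^{1/2}\langle M^T\by,\by\rangle^{1/2},
\]
which is precisely \eqref{est7}. The argument is entirely elementary; the only point to check carefully is that the splitting of local nodes into ``interior'' and ``boundary'' used here is consistent with the global splitting in \eqref{splitting}, but this is immediate since a nodal function $\phi_i$ with $\xi_i \in \partial\Omega_h$ vanishes identically on the interior nodes of every $T$, and vice versa. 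I do not anticipate a real obstacle; the main conceptual step is simply to recognise that the disjoint nodal support gives Euclidean orthogonality of the coefficient vectors, so the abstract inequality of Lemma~\ref{lemma3} applies uniformly in $T$ with a constant depending only on $\hat M$, hence only on $d$ and $k$.
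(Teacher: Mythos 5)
Your proposal is correct and follows essentially the same route as the paper's own proof: split the local coefficient vector according to interior/boundary nodes, observe the disjoint supports give Euclidean orthogonality, note $\kappa(M)=\kappa(\hat M)$ via the affine scaling of the element mass matrix, and invoke Lemma~\ref{lemma3}. The only (harmless) difference is that you spell out the scaling $M^T=\frac{|T|}{|\hat T|}\hat M$ explicitly, which the paper leaves implicit.
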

\begin{proof}
 Take $T\in \cTG$, $v_h^0 \in V_h^0, \, v_h^\Gamma \in V_h^\Gamma$, $v_h:=v_h^0 + v_h^\Gamma$. On $T$ we introduce a local numbering of the element nodal basis functions such that $\phi_i$, $1 \leq i \leq m_0$, correspond to nodes $\xi_i$ in the interior of $\Omega_h$. Note that $T$ has at least one vertex on $\partial \Omega_h$ and thus $1 \leq m_0 < m$ holds. By construction we have
 \[
   v_h^\Gamma(\xi_i)=0~~\text{for}~i=1,\ldots, m_0, \quad v_h^0(\xi_i)=0~~\text{for}~i=m_0+1,\ldots, m.
 \]
The representation in the local basis has a splitting 
\[ {v_h}_{|T}= \sum_{i=1}^m \alpha_i \phi_i = \sum_{i=1}^{m_0} \alpha_i \phi_i+ \sum_{i=m_0+1}^m \alpha_i \phi_i={v_h^0}_{|T} +{v_h^\Gamma}_{|T}.
\]
The corresponding coefficient vector splitting of $\boldsymbol{\alpha}=(\alpha_1, \ldots, \alpha_m)^T$ is 
\[ \boldsymbol{\alpha}=(\alpha_1, \ldots, \alpha_{m_0},0,\ldots, 0)^T+ (0, \ldots, 0,\alpha_{m_0+1}, \ldots, \alpha_m)^T =: \boldsymbol{\alpha}^0 +\boldsymbol{\alpha}^\Gamma.\]
Note that $\langle \boldsymbol{\alpha}^0,\boldsymbol{\alpha}^\Gamma \rangle =0$ holds. Let $M \in \mathbb{R}^{m \times m}$, $M_{i,j}= (\phi_i, \phi_j )_T $, be the element mass matrix. Note that $\kappa (M)=\kappa(\hat M)$ holds. Thus we obtain, using Lemma~\ref{lemma3}:
\begin{align*}
 |(v_h^0,v_h^\Gamma)_T| & = |\langle M\boldsymbol{\alpha}^0,\boldsymbol{\alpha}^\Gamma \rangle| \leq
   \Big(1- \frac{1}{\kappa(M)}\Big) \langle M\boldsymbol{\alpha}^0,\boldsymbol{\alpha}^0 \rangle^\frac12 
    \langle M\boldsymbol{\alpha}^\Gamma,\boldsymbol{\alpha}^\Gamma \rangle^\frac12 \\
    & = \Big(1- \frac{1}{\kappa (\hat M)}\Big) \|v_h^0\|_T \|v_h^\Gamma\|_T,
\end{align*}
which completes the proof.
\end{proof}

Based on these lemmata we derive the following stable splitting main result.
\begin{theorem} \label{thmmain1} We decompose $v_h \in V_h$ as $v_h= v_h^0 +v_h^\Gamma$, $v_h^0 \in V_h^0$, $v_h^\Gamma \in V_h^\Gamma$.  The following holds with constants $K_b$, $K_a$ independent of $u_h$, of $h$ and of how $\Gamma$ intersects $\OG$:
\begin{align}
  \|v_h^0\|_b^2 +\|v_h^\Gamma\|_b^2  & \leq K_b \|v_h\|_b^2 \label{mainb}\\
  \|v_h^0\|_a^2 +\|v_h^\Gamma\|_a^2  & \leq K_a \|v_h\|_a^2 .\label{maina}
\end{align}
\end{theorem}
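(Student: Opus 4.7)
The plan is to prove \eqref{mainb} first and then to deduce \eqref{maina} from it through the uniform norm equivalence \eqref{normeq1}, which gives
$$\|v_h^0\|_a^2+\|v_h^\Gamma\|_a^2 \lesssim \|v_h^0\|_b^2+\|v_h^\Gamma\|_b^2 \leq K_b\|v_h\|_b^2 \lesssim \|v_h\|_a^2.$$
Thus everything reduces to \eqref{mainb}, where I would bound $\|v_h^\Gamma\|_b$ and $\|v_h^0\|_b$ separately.

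For the boundary-strip component $v_h^\Gamma$: assumption \eqref{ass1} implies that every nodal basis function associated with a node in $\partial\Omega_h$ is supported in $\OG$, so $v_h^\Gamma$ is supported in $\OG$ and Corollary \ref{corolmain} gives $\|v_h^\Gamma\|_b^2 \lesssim \|h^{-1}v_h^\Gamma\|_{\OG}^2$. Next I would apply Lemma~\ref{lemma4} together with the identity $\|v_h\|_T^2 = \|v_h^0\|_T^2 + \|v_h^\Gamma\|_T^2 + 2(v_h^0,v_h^\Gamma)_T$: the strengthened Cauchy--Schwarz bound absorbs the cross term to yield on each $T \in \cTG$
$$\|v_h\|_T^2 \geq \kappa(\hat M)^{-1}\bigl(\|v_h^0\|_T^2 + \|v_h^\Gamma\|_T^2\bigr),$$
so in particular $\|v_h^\Gamma\|_T^2 \leq \kappa(\hat M)\|v_h\|_T^2$. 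Multiplying by $h_T^{-2}$ and summing over $T\in\cTG$ gives $\|h^{-1}v_h^\Gamma\|_{\OG}^2 \lesssim \|h^{-1}v_h\|_{\OG}^2$, and Corollary \ref{corolA} then yields $\|h^{-1}v_h\|_{\OG}^2 \lesssim \|h^{-1/2}v_h\|_\Gamma^2 + \|\nabla v_h\|_{\OG}^2 \leq c\|v_h\|_b^2$, which chains to $\|v_h^\Gamma\|_b^2 \lesssim \|v_h\|_b^2$.

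For the interior component $v_h^0$: Corollary \ref{corolmain} gives $\|v_h^0\|_b^2 \lesssim \|\nabla v_h^0\|_{\Omega_h}^2$. Writing $v_h^0 = v_h - v_h^\Gamma$ and again using that $v_h^\Gamma$ vanishes outside $\OG$,
$$\|\nabla v_h^0\|_{\Omega_h}^2 \leq 2\|\nabla v_h\|_{\Omega_h}^2 + 2\|\nabla v_h^\Gamma\|_{\OG}^2 \lesssim \|v_h\|_b^2 + \|v_h^\Gamma\|_b^2 \lesssim \|v_h\|_b^2,$$
where the last step invokes the bound on $v_h^\Gamma$ established in the previous paragraph. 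Adding the two estimates gives \eqref{mainb}, and \eqref{maina} follows as noted above.

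The decisive step is the element-wise stability $\|v_h^\Gamma\|_T \leq \sqrt{\kappa(\hat M)}\,\|v_h\|_T$ on cut simplices. Since $\kappa(\hat M)$ depends only on the reference simplex and the polynomial degree $k$, this constant is \emph{uniform in the cut geometry} --- a property without which no stable splitting between interior and boundary nodes could possibly hold. Corollary \ref{corolA} plays the complementary role of transferring this scaled $L^2$-control on the strip $\OG$ into the full $\|\cdot\|_b$-norm via the Nitsche penalty term $\gamma\|h^{-1/2}\cdot\|_\Gamma^2$, and the combination of these two uniform estimates is exactly what makes the whole argument elementary.
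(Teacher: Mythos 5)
Your proposal is correct and follows essentially the same route as the paper: it rests on the same three ingredients (Corollary~\ref{corolmain}, the element-wise bound $\|v_h^0\|_T^2+\|v_h^\Gamma\|_T^2\leq \kappa(\hat M)\|v_h\|_T^2$ from Lemma~\ref{lemma4}, and Corollary~\ref{corolA}), and your derivation of \eqref{maina} from \eqref{mainb} via \eqref{normeq1} is exactly the paper's. The only cosmetic difference is that you bound $\|v_h^\Gamma\|_b$ first and then treat $v_h^0=v_h-v_h^\Gamma$ by the triangle inequality, whereas the paper bounds both components simultaneously using $v_h=v_h^0$ on $\Omega_h^0$ together with an inverse inequality on $\OG$; both arguments are equally valid and uniform in $h$ and in the cut position.
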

\begin{proof}
 The result in \eqref{maina} is a direct consequence of \eqref{mainb} and \eqref{normeq1}. We derive the result \eqref{mainb} as follows. Note that $v_h^\Gamma=0$ on $\Omega_h^0$, i.e., $v_h=v_h^0$ on  $\Omega_h^0$. Using this, Corollary~\ref{corolmain} and a finite element inverse estimate we get
 \begin{align}
  \|v_h^0\|_b^2 +\|v_h^\Gamma\|_b^2  & \sim \|\nabla v_h^0\|_{\Omega_h}^2 +\|\nabla v_h^\Gamma\|_{\OG}^2 \nonumber \\
   &  = \|\nabla v_h\|_{\Omega_h^0}^2 +\|\nabla v_h^0\|_{\OG}^2 +\|\nabla v_h^\Gamma\|_{\OG}^2 \nonumber \\
   & \lesssim \|\nabla v_h\|_{\Omega_h^0}^2 + \|h^{-1} v_h^0\|_{\OG}^2 + \|h^{-1} v_h^\Gamma\|_{\OG}^2. \label{est8}
 \end{align}
The result in Lemma~\ref{lemma4} yields $\|v_h^0\|_T^2 + \|v_h^\Gamma\|_T^2 \leq \kappa(\hat M) \|v_h\|_T^2$ for all $T\in \cTG$. Thus we get
\begin{align*}
 \|h^{-1} v_h^0\|_{\OG}^2 +\| h^{-1}v_h^\Gamma\|_{\OG}^2 &  = \sum_{T \in \cTG} h_T^{-2} \big( \|v_h^0\|_T^2 + \|v_h^\Gamma\|_T^2\big) \\ &  \leq \kappa(\hat M) \sum_{T \in \cTG} h_T^{-2} \|v_h\|_T^2 =\kappa(\hat M) \|h^{-1}v_h\|_{\OG}^2. 
\end{align*}
Using this in \eqref{est8} and applying Corollary~\ref{corolA} we get
\[
 \|v_h^0\|_b^2 +\|v_h^\Gamma\|_b^2 \lesssim \|\nabla v_h\|_{\Omega_h^0}^2 +  \|h^{-1}v_h\|_{\OG}^2 \lesssim
  \|\nabla v_h\|_{\Omega_h}^2 + \|h^{-\frac12} v_h\|_\Gamma^2 \sim \|v_h\|_b^2,  
\]
which completes the proof.
\end{proof}

\section{An optimal preconditioner} \label{sectPrecond}

 We introduce and analyze an additive subspace decomposition preconditioner using the framework given in \cite{Yserentant:93}. 
  It is convenient to introduce the notation $V_h^1:=V_h^\Gamma$, i.e. we have the stable splitting $V_h=V_h^0\oplus V_h^1$. Let $Q_l: V_h \to V_h^l$, $l=0,1$,  be the $L^2$-projection, i.e., for $u \in V_h$:
  \[
  (Q_l u, w_l)_{\Omega_h}= (u,w_l)_{\Omega_h} \quad \text{for all}~w_l \in V_h^l.
  \]
  The bilinear form $a_h(\cdot,\cdot)$ on $V_h$ that defines the fictitious domain discretization \eqref{defah} can be represented by the operator $A:\, V_h \to V_h$:
  \begin{equation} \label{defA}
  (Au,v)_{\Omega_h} =a_h(u,v) \quad \text{for all}~u,v \in V_h.
  \end{equation}
  The discrete problem \eqref{discrete} has the compact representation $A u=f_Q$, where $f_Q \in V_h$ is the  Riesz representation of the right-hand side functional on $V_h$.  The Ritz approximations $A_l: V_h^l \to V_h^l$, $l=0,1$, of $A$ are given by
  \[
  (A_l u,v)_{\Omega_h}=(Au,v)_{\Omega_h}=a_h(u,v) \quad \text{for all}~ u,v \in V_h^l.
  \]
  Note that these are symmetric positive definite operators.
  In the preconditioner we need symmetric positive definite approximations $B_l : V_h^l \to V_h^l$ of the Ritz operators $A_l$. The spectral equivalence of $B_l$ and $A_l$ is described by the following:
  \begin{equation} \label{spectral}
    \gamma_l ( B_l u,u)_{\Omega_h}\leq (A_l u,u)_{\Omega_h} \leq \rho_l( B_l u,u)_{\Omega_h}  \quad \text{for all}~u \in V_h^l,
  \end{equation}
  with strictly positive constants $\gamma_l$, $\rho_l$, $l=0,1$. The \emph{additive subspace preconditioner} is defined by
  \begin{equation} \label{additive}
    C=B_0^{-1} Q_0+B_1^{-1} Q_1.
  \end{equation}
  For the implementation of this preconditioner one has to solve (in parallel) two linear systems. The operator $Q_l$ is not  needed in the implementation, since if for a given $z \in V_h$ one has to determine $d_l =B_l^{-1} Q_l z$, the solution can be obtained as follows: determine $d_l \in V_h^l$ such that
  \[
  (B_l d_l, v)_{\Omega_h}=(z,v)_{\Omega_h}  \quad \text{for all}~v \in V_h^l.
  \]
  The theory presented in \cite{Yserentant:93} can be used to quantify the quality of the preconditioner $C$.
  \begin{theorem} \label{MMain}
    Define $\gamma_{\min}= \min_l \gamma_l$, $\rho_{\max} = \max_l \rho_l$. Let $K_a$ be the constant of the stable splitting in \eqref{maina}. The spectrum $\sigma (CA)$ is real and
    \[
    \sigma(CA) \subset \big[ \frac{\gamma_{\min}}{K_a }, 2 \rho_{\max} \big]
    \]
    holds.
  \end{theorem}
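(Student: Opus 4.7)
The plan is to recognize the theorem as a direct application of the abstract additive Schwarz framework of \cite{Yserentant:93}: all nontrivial geometric content has already been packed into the stable splitting \eqref{maina} and the local spectral equivalences \eqref{spectral}. First I would note that $CA$ is similar to the symmetric positive definite operator $C^{1/2} A C^{1/2}$ (formed with respect to the $L^2(\Omega_h)$ scalar product), so $\sigma(CA) \subset \mathbb{R}$, and both extreme eigenvalues admit the Rayleigh-type characterization
\[
  \lambda_{\min}(CA) = \inf_{0 \neq v \in V_h} \frac{a_h(v,v)}{(C^{-1} v, v)_{\Omega_h}}, \qquad \lambda_{\max}(CA) = \sup_{0 \neq v \in V_h} \frac{a_h(v,v)}{(C^{-1} v, v)_{\Omega_h}}.
\]
The standard Lions-type identity
\[
  (C^{-1} v, v)_{\Omega_h} = \min\Big\{\, \sum_{l=0}^{1} (B_l v_l, v_l)_{\Omega_h} : v = v_0 + v_1,\ v_l \in V_h^l \,\Big\}
\]
then reduces both eigenvalue bounds to judicious choices of decompositions of $v$.

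For the upper bound $\lambda_{\max}(CA) \leq 2\rho_{\max}$, I would use that only two subspaces are involved. For \emph{every} admissible decomposition $v = v_0 + v_1$ the triangle inequality in $\|\cdot\|_a$ gives $\|v\|_a^2 \leq 2(\|v_0\|_a^2 + \|v_1\|_a^2)$. Combining this with the right inequality in \eqref{spectral}, which is equivalent to $(B_l v_l, v_l)_{\Omega_h} \geq \rho_l^{-1} \|v_l\|_a^2$, produces $\sum_{l=0}^1 (B_l v_l, v_l)_{\Omega_h} \geq (2\rho_{\max})^{-1} \|v\|_a^2$ uniformly in the decomposition; passing to the minimum yields $(C^{-1} v, v)_{\Omega_h} \geq (2\rho_{\max})^{-1} a_h(v,v)$.

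For the lower bound $\lambda_{\min}(CA) \geq \gamma_{\min}/K_a$ I would instead plug in the \emph{specific} decomposition $v = v_h^0 + v_h^\Gamma$ furnished by Theorem~\ref{thmmain1}. The Lions identity then gives $(C^{-1}v,v)_{\Omega_h} \leq (B_0 v_h^0, v_h^0)_{\Omega_h} + (B_1 v_h^\Gamma, v_h^\Gamma)_{\Omega_h}$; the left inequality in \eqref{spectral}, rewritten as $(B_l w, w)_{\Omega_h} \leq \gamma_l^{-1} \|w\|_a^2$, together with the stable splitting estimate \eqref{maina}, bounds the right-hand side by $\gamma_{\min}^{-1} K_a \|v\|_a^2$. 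Rearranging this gives the desired lower bound on the Rayleigh quotient.

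The only step requiring any genuine attention is the verification of the Lions-type minimization identity for $(C^{-1}v,v)_{\Omega_h}$; this is however a completely standard duality argument that does not interact with the fictitious-domain geometry at all. Every other ingredient is already supplied by Section~\ref{sectsubspace} and by \eqref{spectral}, so I do not expect any substantive obstacle.
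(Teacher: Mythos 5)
Your proposal is correct and follows essentially the same route as the paper: both bounds reduce to exactly the two inequalities the paper verifies, namely $\sum_{l}(B_l u_l,u_l)_{\Omega_h}\le \gamma_{\min}^{-1}K_a\,\|u_0+u_1\|_a^2$ via Theorem~\ref{thmmain1} and the left part of \eqref{spectral}, and $\|u_0+u_1\|_a^2\le 2\rho_{\max}\sum_{l}(B_l u_l,u_l)_{\Omega_h}$ via the triangle inequality and the right part of \eqref{spectral}. The only difference is that where the paper simply invokes the abstract additive subspace result of \cite{Yserentant:93} (Theorem 8.1), you re-derive it through the Rayleigh quotient for $C^{1/2}AC^{1/2}$ and the standard minimization identity for $(C^{-1}v,v)_{\Omega_h}$, which is legitimate and in fact particularly easy here because $V_h=V_h^0\oplus V_h^\Gamma$ is a direct sum, so the minimum is taken over the single (unique) decomposition.
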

  \begin{proof}
    We recall a main result from \cite[Theorem 8.1]{Yserentant:93}. If there are strictly positive constants $K_1, K_2$ such that
    \[
    K_1^{-1} \sum_{l=0}^1 (B_l u_l,u_l)_{\Omega_h} \leq \|u_0+u_1\|_a^2 \leq K_2 \sum_{l=0}^1 (B_l u_l,u_l)_{\Omega_h}\quad \text{for all}~u_l \in V_h^l,
    \]
    is satisfied, then $\sigma(CA) \subset [K_1^{-1},K_2]$ holds.
    For the lower bound we use Theorem~\ref{thmmain1} and \eqref{spectral}, which then results in
    \[
    \|u_0+u_1\|_a^2 \geq K_a^{-1} \sum_{l=0}^1\|u_l\|_a^2 = K_a^{-1} \sum_{l=0}^1 (A_l u_l,u_l)_{\Omega_h} \geq \frac{\gamma_{\min}}{K_a}  \sum_{l=0}^1 (B_l u_l,u_l)_{\Omega_h}.
    \]
    For the upper bound we note
    \[
    \|u_0+u_1\|_a^2  \leq 2 \sum_{l=0}^1\|u_l\|_a^2 = 2 \sum_{l=0}^1 (A_l u_l,u_l)_{\Omega_h} \leq 2 \rho_{\max}\sum_{l=0}^1 (B_l u_l,u_l)_{\Omega_h}. 
    \]
    Now we apply the above-mentioned result with $K_1=K_a/\gamma_{\min}$ and $K_2=2 \rho_{\max}$.
  \end{proof}
  \ \\[1ex]
  The result in Theorem~\ref{thmmain1}  yields that the constant $K_a$ is independent of $h$ and of how the triangulation intersects the interface $\Gamma$. It remains to choose appropriate operators $B_l$ such that $\gamma_{\rm min}$ and $\rho_{\max}$ are uniform constants, too.

  We first consider the approximation $B_0$ of the Ritz-projection $A_0$ in $V_h^0$. Using \eqref{normeq1} and \eqref{normeq3} we get
  \[
  (A_0 u,u)_{\Omega_h}=a_h(u,u) \sim (\nabla u, \nabla u)_{\Omega_h}  \quad \text{for all}~~u \in V_h^0.
  \]
  Hence, $A_0$ is \emph{uniformly equivalent to a standard finite element discretization (in $V_h^0$) of the Poisson equation with zero Dirichlet boundary conditions on $\Omega_h$}. As a preconditioner $B_0$ for $A_0$ we can use a  symmetric multigrid method (which is a multiplicative subspace correction  method). There is a technical issue related to the nesting of spaces because the domain $\Omega_h$ varies with $h$. This is addressed in Remark~\ref{RemMG} below. One can also use an algebraic multigrid preconditioner applied to $A_0$, which is what we do in our numerical experiments, cf. section~\ref{sectNumExp}. For these choices of $B_0$ we have spectral inequalities as in \eqref{spectral}, with constants $\gamma_0 >0, \rho_0$ that are independent of $h$ and of how $\Gamma$ intersects the triangulation. 

  It remains to find an appropriate preconditioner $B_1$ of $A_1$, which is the Ritz projection in $V_h^1=V_h^\Gamma$. The norm equivalence \eqref{normeq2} implies that a simple diagonal scaling (Jacobi preconditioner) is already an optimal preconditioner. To derive  this result in the operator framework used above,  we  introduce the operator $B_1$ that represents the Jacobi preconditioner. Recall that $V_h^1= {\rm span}\{ \phi_i~|~ N_0+1 \leq i \leq N\, \} $. Define the index set $\I:=\{\, i~|~N_0+1 \leq i \leq N\, \}$.  Elements $u,v \in V_h^1$ have unique representations  $u= \sum_{i \in \I} \xi_i \phi_i$, $v=\sum_{i \in \I} \zeta_i \phi_i$.  In terms of these representations the Jacobi preconditioner is defined by
  \begin{equation} \label{Jacobi}
    (B_1 u,v)_{\Omega_h}= \sum_{i \in \I} \xi_i \zeta_i a_h( \phi_i,\phi_i), \quad u,v \in V_h^1.
  \end{equation}
  Note that $ a_h( \phi_i,\phi_i)$ are diagonal entries of the stiffness matrix $\MAT{A}$ in \eqref{linsystem}.  The result in the next lemma shows that this diagonal scaling  yields a robust preconditioner for the Ritz operator $A_1$. 
  \begin{lemma} \label{lemspectralJ}
For the Jacobi preconditioner $B_1$ there are strictly positive constants $\gamma_1$, $\rho_1$, independent of $h$ and of how the domain $\Omega_h$ is intersected by $\Gamma$ such that 
    \begin{equation} \label{spectralJ}
      \gamma_1 ( B_1 u,u)_{\Omega_h} \leq (A_1 u,u)_{\Omega_h} \leq \rho_1( B_1 u,u)_{\Omega_h}  \quad \text{for all}~u \in V_h^1
    \end{equation}
    holds.
  \end{lemma}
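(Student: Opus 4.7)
The plan is to establish the spectral equivalence by showing that both $(A_1 u,u)_{\Omega_h}$ and $(B_1 u,u)_{\Omega_h}$ are uniformly equivalent to the same quantity, namely the scaled $L^2$-norm $\|h^{-1}u\|_{\OG}^2$. For the first, this is immediate: $(A_1 u, u)_{\Omega_h}=a_h(u,u)=\|u\|_a^2$, and since $u \in V_h^1 = V_h^\Gamma$, combining the norm equivalence \eqref{normeq1} with the equivalence \eqref{normeq2} in Corollary~\ref{corolmain} gives $\|u\|_a^2 \sim \|h^{-1} u\|_{\OG}^2$.

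For the Jacobi form, I would first estimate the individual diagonal entries $a_h(\phi_i,\phi_i)$ for $i \in \I$. Each $\phi_i$ lies in $V_h^\Gamma$, and by assumption \eqref{ass1} every simplex in the support of $\phi_i$ (i.e., every simplex containing the boundary node $\xi_i$) belongs to $\cTG$. Applying the same norm equivalence to $\phi_i$ and combining with the standard scaling $\|\phi_i\|_T^2 \sim h_T^d$ together with shape regularity yields $a_h(\phi_i,\phi_i) \sim h_{T_i}^{d-2}$, where $T_i$ denotes any simplex in $\cTG$ containing $\xi_i$. Therefore
\[
(B_1 u, u)_{\Omega_h} = \sum_{i \in \I} \xi_i^2\, a_h(\phi_i,\phi_i) \sim \sum_{i \in \I} \xi_i^2\, h_{T_i}^{d-2}.
\]

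It remains to prove the matching equivalence $\|h^{-1} u\|_{\OG}^2 \sim \sum_{i \in \I} \xi_i^2 h_{T_i}^{d-2}$. Here I would invoke the reference-simplex equivalence of the element mass matrix to the identity, which on each $T \in \cTG$ gives $\|u\|_T^2 \sim h_T^d \sum_j \xi_j^2$, where the sum runs over nodes of $T$; since $u \in V_h^\Gamma$, only boundary-node coefficients contribute. Multiplying by $h_T^{-2}$, summing over $T \in \cTG$, interchanging the order of summation, and using shape regularity (bounded valence at each node and comparability of diameters of neighboring elements) gives the claimed equivalence. Combining with the first paragraph then yields both inequalities of \eqref{spectralJ} with constants $\gamma_1$, $\rho_1$ independent of $h$ and of how $\Gamma$ cuts the triangulation.

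The proof is essentially bookkeeping on top of the norm equivalences already established in Section~\ref{sectsubspace}, so I do not anticipate a substantive obstacle. The only delicate point is the passage between node-indexed and element-indexed sums: it rests on shape regularity of $\cTG$ combined with the geometric fact, guaranteed by \eqref{ass1}, that every simplex containing a boundary node of $\Omega_h$ lies in $\cTG$. It is precisely this property that prevents the constants from degenerating with the cut location.
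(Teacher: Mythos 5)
Your proof is correct and follows essentially the same route as the paper's: both reduce $(A_1u,u)_{\Omega_h}$ and $(B_1u,u)_{\Omega_h}$ to the scaled norm $\|h^{-1}u\|_{\OG}^2$ via \eqref{normeq1}--\eqref{normeq2}, use the element-wise mass-matrix scaling $\|u\|_T^2\sim |T|\sum_{i\in N(T)}\xi_i^2$, and reindex between node- and element-based sums using shape regularity (with $h_T^{-2}|T|\sim h_T^{d-2}$). Your explicit observation that \eqref{ass1} places the support of each boundary-node basis function inside $\OG$ is a point the paper uses only implicitly, but it is the same argument.
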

  \begin{proof} Take $u= \sum_{i \in \I} \xi_i \phi_i \in V_h^1$. For $T \in \cTG$ the set of finite element nodes in $T$ is denoted by $N(T)$. Standard arguments yield that $\|u\|_T^2 \sim |T| \sum_{i \in N(T)} \xi_i^2$ holds. Using this and the norm equivalences \eqref{normeq1}, \eqref{normeq2} we get
  \begin{equation} \label{eq3} \begin{split}
   (A_1u,u)_{\Omega_h} & =\|u\|_a^2 \sim \|u\|_b^2 \sim \|h^{-1} u\|_{\OG}^2  \\ & = \sum_{T \in \cTG} h_T^{-2} \|u\|_T^2 \sim
    \sum_{T \in \cTG} h_T^{-2} |T| \sum_{i \in N(T)} \xi_i^2. \end{split}
  \end{equation}
For the Jacobi operator we get, using \eqref{normeq1}, \eqref{normeq2}:
\begin{equation} \label{eq4} \begin{split}
 (B_1u,u)_{\Omega_h} & =\sum_{i \in \I} \xi_i^2 \|\phi_i\|_a^2 \sim \sum_{i \in \I} \xi_i^2 \|h^{-1}\phi_i\|_{\OG}^2 \sim \sum_{i \in \I}\xi_i^2 \sum_{T \in {\rm supp}(\phi_i)}  h_T^{-2} |T| \\
  & \sim \sum_{T \in \cTG}h_T^{-2} |T|\sum_{i \in N(T)} \xi_i^2 .
\end{split} \end{equation}
Comparing \eqref{eq3} and \eqref{eq4} it follows that the result \eqref{spectralJ} holds.
   \end{proof}
\ \\
\begin{remark} \label{RemMG}\rm
 We discuss a multigrid preconditioner for the subspace stiffness matrix given by
 \begin{equation} \label{matsub}
   \bA_{i,j}:= a_h(\phi_i,\phi_j), \quad 1 \leq i,j \leq N_0.
 \end{equation}
Recall that ${\rm span}\{\, \phi_i~|~1 \leq i \leq N_0\,\}= V_h^0= \{\, v_h \in V_h~|~{v_h}_{|\partial \Omega_h}=0\,\}$ and that the uniform norm equivalence $a_h(v,v)\sim (\nabla v,\nabla v)_{\Omega_h}$ for all $v \in V_h^0$ holds. For a multigrid preconditioner one needs a suitable hierarchy of ``coarser spaces'' and it is convenient if these are nested. In our setting a natural nested hierarchy is obtained as follows. We assume that on the larger polygonal domain $\Omega^\ast$ we have nested triangulations (coarse to fine) $\T_{h_0}^\ast, \T_{h_1}^\ast, \ldots ,\T_{h_{\J}}^\ast$, where $h_\J=h$ corresponds to the finest triangulation with stiffness matrix $\bA$ given in \eqref{linsystem}. This finest triangulation defines the fictitious  domain $\Omega_h$, cf. \eqref{Pa}. Corresponding to $\T_{h_j}^\ast$ we have standard nested finite element spaces $V_{h_j}^\ast:= \{\, v_h \in C(\Omega^\ast)~|~{v_h}_{|T} \in \mathcal{P}_k~~\text{for all}~T \in \T_{h_j}^\ast\,\}$. The nodal basis functions in the space $V_{h_j}^\ast$ are denoted by $\phi_i^{(j)}$, $i=1, \ldots {\rm dim}(V_{h_j}^\ast)$. We now define the hierarchy of nested subspaces:
\begin{equation} \label{defnested}
V_{h_j}^0:= {\rm span}\{\, \phi_i^{(j)}~|~ \phi_i^{(j)}(x)=0\quad \text{for all}~ x \notin \Omega_h\,\}, \quad j=0,1,\ldots, \J.
\end{equation}
Note that the domain $\Omega_h$ used in this definition is the fictitious domain on the finest level. One easily verifies that the nestedness property $V_{h_0}^0 \subset V_{h_1}^0 \subset \ldots V_{h_{\J}}^0= V_h^0$ holds. Thus we can construct a standard multigrid (multiplicative or additive) preconditioner. 
Optimal preconditioning properties of such a preconditioner for  the case of a level dependent (fictitious) domain $\Omega_h$ have been studied in the literature. For linear finite elements ($k=1$) and a quasi-uniform family of triangulations the analysis in \cite{Kornhuber} yields a condition number bound of the preconditioned matrix that grows linearly in the level number $\J$. The authors note that this very slow growth of the bound can be eliminated, i.e., one obtains an optimal preconditioner,  using similar ideas and techniques from \cite{Oswald}. They also note that all techniques and results can be generalized to the case of non-uniformly refined shape regular triangulations. For this one can work with $L^2$-like space decompositions that are based on local projections \cite{DahmenKunoth,Bornemann}. 
\end{remark}
\ \\

The analysis above leads to the following preconditioners for the linear system in \eqref{linsystem}
We use a matrix block partitioning according to the index set splitting in \eqref{splitting}.  The matrices corresponding to the Ritz approximations $A_l$ (projection on $V_h^l$)  are denoted by $\MAT{A}_l$, $l=0,1$.
Preconditioners of $\bA_l$  are denoted by  $\MAT{B}_l$, $l=0,1$, for instance $\MAT{B}_0$ a multigrid preconditioner and $\MAT{B}_1={\rm diag}(\bA_1)$ corresponding to the Jacobi method.  
  We define the block Jacobi preconditioners 
  \begin{equation} \label{preco}
    \MAT{P}_{\MAT{A}} := \left( \begin{array}{cc} \MAT{A}_0 & \MAT{0} \\ \MAT{0} & \MAT{A}_1  \end{array} \right), \quad
    \MAT{P}_{\MAT{D}} := \left( \begin{array}{cc} \MAT{A}_0 & \MAT{0} \\ \MAT{0} & \MAT{B}_1  \end{array} \right), \quad
    \MAT{P}_{\MAT{B}} := \left( \begin{array}{cc} \MAT{B}_0 & \MAT{0} \\ \MAT{0} & \MAT{B}_1 \end{array} \right).
  \end{equation}
The preconditioner $\MAT{P}_{\MAT{A}}$ corresponds to exact subspace solves and due to Theorem~\ref{MMain} we have the condition number bound $\kappa(\MAT{P}_{\MAT{A}}^{-1} \MAT{A}) \leq 2 K_a$. If in $\MAT{P}_{\MAT{D}}$ we use a Jacobi preconditioner for the block $ \MAT{B}_1$ then Lemma~\ref{lemspectralJ} yields
$\kappa(\MAT{P}_{\MAT{D}}^{-1} \bA) \leq 2 \frac{\rho_1}{\gamma_1} K_a$. Recall that the constants $K_a$, $\gamma_1$ and $\rho_1$ are independent of $h$ and of how the triangulation $\T_h$ intersects the boundary $\Gamma$. Finally, for ${\MAT{P}}_{\MAT{B}}$ we have $\kappa(\MAT{P}_{\MAT{B}}^{-1} \MAT{A}) \leq 2 \frac{\rho_{\\max}}{\gamma_{\max}} K_a$. This bound is independent  $h$ and of how the triangulation $\T_h$ intersects the boundary $\Gamma$ if we use a preconditioner $\MAT{B}_0$ with the uniform spectral equivalence $\MAT{B}_0 \sim \MAT{A}_0$ property, cf. discussion in Remark~\ref{RemMG}.

\section{Numerical experiments} \label{sectNumExp}

We choose the unit ball $\Omega:=B_1(x_0)=\{x\in\R^3:~\|x-x_0\|_2=1\}$ around midpoint $x_0\in\R^3$ and the outer domain $\Omega^*:=[-1.5,1.5]^3 \supset \Omega$. For $x\in\R^3$ we define $\hat x:= x-x_0$. If not stated differently, we use $x_0=(0.001, 0.002, 0,003)^T$ in the remainder to avoid symmetry effects. 
For the function $u:\Omega^*\to\R$, $u(x):= (3\hat x_1^2 \hat x_2 - \hat x_2^3) \exp(1-\|\hat x\|_2^2)$, the right-hand side $f(x)=u(x)(-4\|\hat x\|_2^2 + 18)$ and boundary data $g=u$ are chosen such that $u$ is a solution of \eqref{Poisson} on $\Omega$. All numerical experiments have been performed with the DROPS package \cite{DROPS}.

For the numerical discretization, the outer domain $\Omega^*$ is partitioned into $4\times 4\times 4$ cubes, where each cube is further subdivided into 6 tetrahedra, forming an initial tetrahedral triangulation $\T_0^\ast$ of $\Omega^*$. Applying an adaptive refinement algorithm, where all tetrahedra $T\in\T_0^\ast$ with $\mathrm{meas}_3(T\cap\Omega)>0$ are marked for regular refinement, we obtain the refined grid $\T_1$. Repeating this refinement process yields the grids $\T_\ell$ with refinement levels $\ell=2,\ldots,6$ and corresponding grid sizes $h_\ell=2^{-\ell}\cdot\frac{3}{4}$. 

We use linear finite elements ($k=1$) and construct finite element spaces $V_{h_\ell}$ on the respective grids $\T_\ell$, $\ell=0,1,\ldots,6$. Table~\ref{tab:grid} reports the numbers $N_0=\dim V_{h_\ell}^0$ (the number of grid points inside the fictitious domain $\Omega_h$) and $N_1=\dim V_{h_\ell}^\Gamma$ (the number of grid points on $\partial \Omega_h$) for different grid levels. We observe that $N_0$ and $N_1$ grow with the expected  factors of approximately $8$ and $4$, respectively.

\begin{table}[ht!]
\begin{minipage}{0.4\textwidth}\centering
\begin{tabular}{crr}
\toprule
$\ell$ & $N_0$ & $N_1$ \\
\midrule
 0 & 7 &	44 \\
 1 & 81 &	140 \\
 2 & 619 &	500 \\
 3 & 5,070 &	1,844 \\
 4 & 40,642 &	7,102 \\
 5 & 325,444 &	27,714 \\
 6 & 2,602,948 &	109,510 \\
\bottomrule
\end{tabular}
\caption{Dimensions $N_0, N_1$ for different refinement levels $\ell$.}
\label{tab:grid}
\end{minipage} \hfill
\begin{minipage}{0.55\textwidth}\centering
\begin{tabular}{cllll}
\toprule
$\ell$ & $\|u-u_h\|_0$ & order & $\|u-u_h\|_1$ & order \\
\midrule
 0 & 2.19E-01&&1.26E+00&     \\
 1 & 5.95E-02&1.88&6.17E-01&1.04\\
 2 & 1.43E-02&2.05&3.12E-01&0.98 \\
 3 & 3.40E-03&2.08&1.56E-01&1.00 \\
 4 & 8.15E-04&2.06&7.81E-02&1.00 \\
 5 & 1.98E-04&2.04&3.91E-02&1.00 \\
 6 & 4.89E-05&2.02&1.96E-02&1.00 \\
\bottomrule
\end{tabular}
\caption{Discretization errors w.r.t. $L^2$ and $H^1$ norm for different refinement levels $\ell$.}
\label{tab:conv}
\end{minipage}
\end{table}

Choosing the Nitsche parameter $\gamma=10$ and ghost penalty parameter $\beta=0.1$, we obtain numerical solutions $u_{h_\ell}\in V_{h_\ell}$ of the discrete problem \eqref{discrete}, with  discretization errors w.r.t. the $L^2$ and $H^1$ norm as  in Table~\ref{tab:conv}. We clearly observe optimal convergence rates in the $L^2$  and in the $H^1$ norm. 

In the following, we apply a preconditioned conjugate gradient (PCG) method  to the linear system \eqref{linsystem} and examine different choices of preconditioners $\MAT{P}\in\R^{N \times N}$. Starting with $
\bu^0=0$, the PCG iteration is stopped when the preconditioned residual is reduced by a factor $\mathrm{tol}=10^{-6}$, i.e. 
\begin{align*}
  \|\MAT{P}^{-1}(\MAT{A}\bu^k - \textbf{b})\|_2 &\leq \mathrm{tol}\, \|\MAT{P}^{-1}(\MAT{A}\bu^0 - \textbf{b})\|_2,
\end{align*}
with $\|\cdot\|_2$ the Euclidean norm.
Let $\MAT{A}=\MAT{D}+\MAT{L}+\MAT{L}^T$ be the splitting of $\MAT{A}$ into the diagonal part $\MAT{D}$ and the strict lower and upper parts $\MAT{L}, \MAT{L}^T$, respectively. Instead of the Jacobi method, which is considered in the theoretical analysis in section~\ref{sectPrecond}, we use the symmetric Gauss-Seidel preconditioner $\MAT{P}_\text{SGS}=(\MAT{D}+\MAT{L})\MAT{D}^{-1}(\MAT{D}+\MAT{L}^T)$, because this method typically is more efficient than the Jacobi method. For the matrix blocks $\MAT{A}_1 \in\R^{N_1\times N_1}$,  the corresponding symmetric Gauss-Seidel preconditioner is denoted by $\MAT{B}_{\text{SGS},1}$.

We present results for the following preconditioners:
\begin{itemize}
 \item the symmetric Gauss-Seidel preconditioner $\MAT{P}_\text{SGS}$,
 \item the block Jacobi preconditioners 
\begin{align} 
 \MAT{P}_\MAT{A}=\begin{pmatrix}
         \MAT{A}_0 & \mathbf{0} \\ \mathbf{0} & \MAT{A}_1 
        \end{pmatrix}, ~
 \MAT{P}_\MAT{D}=\begin{pmatrix}
         \MAT{A}_0 & \mathbf{0} \\ \mathbf{0} & \MAT{B}_{\text{SGS},1} 
        \end{pmatrix}, ~
 \MAT{P}_\MAT{B}=\begin{pmatrix}
         \MAT{B}_0 & \mathbf{0} \\ \mathbf{0} & \MAT{B}_{\text{SGS},1} 
        \end{pmatrix},
\end{align}
where $\MAT{B}_0$ denotes one iteration of an algebraic multigrid solver (HYPRE BoomerAMG \cite{HYPRE}).
\end{itemize}
The condition numbers $\kappa_2(\MAT{A})=\|\MAT{A}\|_2\|\MAT{A}^{-1}\|_2$ and PCG iteration numbers for different refinement levels $\ell$ are reported in Table~\ref{tab:pc-gridref}.
\begin{table}[ht!]  \centering
\begin{tabular}{clllll}
\toprule
$\ell$ & $\kappa_2(\MAT{A})$ &  \multicolumn{4}{c}{PCG iterations}\\
&& $\MAT{P}_\text{SGS}$ & $\MAT{P}_\MAT{A}$ & $\MAT{P}_\MAT{D}$ & $\MAT{P}_\MAT{B}$ \\
\midrule
 0 & 1.41E+02 &   8 &  9  & 11 & 11  \\
 1 & 1.03E+02 &   9 & 12  & 12 & 12\\
 2 & 1.58E+02 &  13 & 11  & 14 & 14\\
 3 & 2.97E+02 &  20 & 13  & 16 & 16\\
 4 & 7.74E+02 &  34 & 13  & 17 & 17\\
 5 & 3.11E+03 &  56 & 13  & 14 & 15\\
 6 & 1.26E+04 &  107 & 16 & 22 & 23 \\
\bottomrule
\end{tabular}
\caption{Condition numbers and PCG iteration numbers for different preconditioners and varying grid refinement levels $\ell$.}
\label{tab:pc-gridref}
\end{table}

For finer grid levels $\ell\geq 4$ the condition number $\kappa_2(\MAT{A})$ behaves like $\sim h^{-2}$ similar to  stiffness matrices for standard Poisson discretizations. 
For the symmetric Gauss-Seidel preconditioner $\MAT{P}_\text{SGS}$, on the finer grid levels the iteration numbers grow approximately like $h^{-1}$.
For the block preconditioners $\MAT{P}_\MAT{A}, \MAT{P}_\MAT{D}, \MAT{P}_\MAT{B}$, we observe almost constant iteration numbers for increasing level $\ell$. For all three block preconditioners the number of iterations roughly doubles when going from the coarsest level $\ell=0$ to the finest one $\ell=6$. The third preconditioner, $\MAT{P}_\MAT{B}$, is the only one with computational costs $\mathcal{O}(N)$, $N:=N_0+N_1$, with a constant independent of $\ell$.  Note that on level $\ell=5$ there is, compared to level $\ell=4$, a reduction in the number of iterations for the preconditioners  $\MAT{P}_\MAT{D}, \MAT{P}_\MAT{B}$. This might be due to a fortuitous geometric cut of $\Gamma$ with the triangulation that leads to a relatively very good performance of the symmetric Gauss-Seidel preconditioner $\MAT{B}_{\text{SGS},1}$ for the $\MAT{A}_1$ block.

We now fix the grid refinement level $\ell=3$ and vary the midpoint $x_0= (\delta, 2\delta, 3\delta)$ of the ball $\Omega$ with $\delta\in[0,0.5]$, leading to different relative positions of $\Gamma$ within the background mesh $\T_3$. The condition numbers and PCG iteration numbers for different choices of $\delta$ are reported in Table~\ref{tab:pc-relpos}.
\begin{table}[ht!]\centering
\begin{tabular}{lllllll}
\toprule
$\delta$ & $\kappa_2(\MAT{A})$ &  \multicolumn{4}{c}{PCG iterations}\\
&& $\MAT{P}_\text{SGS}$ & $\MAT{P}_\MAT{A}$ & $\MAT{P}_\MAT{D}$ & $\MAT{P}_\MAT{B}$ \\
\midrule
 0    & 2.50E+02 &  20 & 12  & 15 & 15  \\
 0.01 & 1.93E+02 &  20 & 12  & 14 & 14\\
 0.02 & 5.57E+02 &  20 & 12  & 19 & 19\\
 0.03 & 5.38E+02 &  20 & 12  & 16 & 16\\
 0.04 & 5.61E+02 &  20 & 12  & 17 & 17\\
 0.05 & 5.84E+02 &  20 & 12  & 20 & 20\\
\bottomrule
\end{tabular}
\caption{Condition numbers and PCG iteration numbers for different preconditioners and varying midpoint $x_0= (\delta, 2\delta, 3\delta)$.}
\label{tab:pc-relpos}
\end{table}
We observe that for varying $\delta$,  due to the ghost penalty stabilization, the condition number $\kappa_2(\bA)$ has the same order of magnitude. The PCG iteration numbers for the preconditioners $\MAT{P}_\text{SGS}$, $\MAT{P}_\MAT{A}$ are constant for varying $\delta$, whereas for the preconditioners $\MAT{P}_\MAT{D}$, $\MAT{P}_\MAT{B}$ the iteration numbers are  identical and change only slightly for varying $\delta$.

Finally we note that for our analysis to be applicable it is essential that we consider the fictitious domain Nitsche method \emph{with stabilization}, i.e., $\beta >0$ in \eqref{defah}. We performed numerical experiments with $\beta =0$. The results (not shown here) revealed not only that for $\beta=0$ the condition numbers $\kappa_2(\MAT{A})$ can be extremely large (due to ``bad cuts''), but also that the condition numbers  $\kappa_2(\MAT{P}^{-1}\MAT{A})$ with $\MAT{P} \in \{ \MAT{P}_\MAT{A}, \MAT{P}_\MAT{D},\MAT{P}_\MAT{B}\}$, show a very irregular behavior, where for certain cases these condition numbers become very large. From this we conclude that for good performance of the block Jacobi  preconditioners presented in this paper the use of a (ghost penalty) stabilization in the discretization method is essential.

\bibliographystyle{siam}
\bibliography{literatur}


\end{document}